\newtheorem{thm}{Theorem}[section]
\newtheorem{cor}[thm]{Corollary}
\newtheorem{lem}[thm]{Lemma}
\newtheorem{prop}[thm]{Proposition}
\theoremstyle{definition}
\newtheorem{defn}[thm]{Definition}
\theoremstyle{remark}
\newtheorem{rem}[thm]{Remark}
\theoremstyle{definition}
\newtheorem{exm}[thm]{Example}
\numberwithin{equation}{section}
\begin{document}
\title[]
{A unified framework for distributed optimization algorithms over time-varying directed graphs}

\author{Woocheol Choi}
\address{Department of Mathematics, Sungkyunkwan University, Suwon 440-746, Republic of Korea}
\email{choiwc@skku.edu}

\author{Doheon Kim}
\address{School of Mathematics, Korea Institute for Advanced Study, Seoul 02455, Republic of Korea}
\email{doheonkim@kias.re.kr}

\author{Seok-Bae Yun}
\address{Department of Mathematics, Sungkyunkwan University, Suwon 440-746, Republic of Korea}
\email{sbyun01@skku.edu}

\subjclass[2010]{Primary 90C25, 68Q25}

\keywords{Distributed Gradient methods, Unified framework, Gradient-push algorithm}

\maketitle

\begin{abstract}
In this paper, we propose a framework under which  the decentralized optimization algorithms suggested in \cite{JKJJ,MA, NO,NO2} can be treated in a unified manner. More precisely, we show that the distributed subgradient descent algorithms \cite{JKJJ, NO}, the subgradient-push algorithm \cite{NO2}, and the distributed algorithm with row-stochastic matrix  \cite{MA} can be derived by making suitable choices of consensus matrices, step-size and subgradient from the
decentralized subgradient descent proposed in \cite{NO}. As a result of such unified understanding, we provide a convergence proof that covers
the algorithms in \cite{JKJJ,MA, NO,NO2} under a novel algebraic condition that is strictly weaker than the conventional graph-theoretic condition in \cite{NO}. 
This unification also enables us to derive a new distributed optimization scheme.

\end{abstract}

\section{Introduction}
In this paper, we consider $N$ agents cooperating with each other to solve the following optimization problem:
\begin{equation}\label{opt}
\mbox{Find a minimizer}\quad x^* \in \mathbb{R}^d \quad\mbox{of the function} \quad   f( x):=   \sum_{i=1}^{N} f_i( x).
\end{equation}
Here, for each $1\leq i\leq N$, the function $f_i : \mathbb{R}^d \rightarrow \mathbb{R}$ is local cost known only to the $i$-th agent. The $N$ agents are connected by a network and each agent can receive information from its neighboring agents. Distributed optimization has received a lot of attention due to its application for various problems containing wireless sensor network, multi-agent control, machine learning. In distributed optimization, many agents have their own local cost and try to find a minimizer of the sum of those local cost functions in a collaborative way. The algorithms consist of   optimization step for local function and consensus step by communication.

 In the seminal work \cite{NO}, Nedic et al introduced the concept of the decentralized (sub)gradient descent (DGD) to solve \eqref{opt}:
\begin{equation}\label{frame}
 x_i(t+1)=\sum_{j=1}^N p_{ij}(t)x_j(t)-\delta_i(t)g_i(t),\quad i=1,\dots,N,\quad t=0,1,2,\dots,
\end{equation}
with the $N\times N$ row-stochastic matrices $P(t):=(p_{ij}(t))$, the step-size $\delta_i(t)$, and the subgradients $g_i(t)\in\partial f_i(x_i(t))$ $(1\leq i\leq N,~t=0,1,2,\dots)$. The matrix $P$ contains the information on the connectivity of the underlying network: $p_{ij}(t)$ vanishes if the $i$-th agent does not receive any information from the $j$-th agent at time $t$. Otherwise $p_{ij}$ is always positive. 

Many varaints and extensions of \eqref{frame} were proposed in the literature, and their convergence properties were extensively studied. In \cite{JKJJ}, the authors designed a distributed subgradient algorithm to solve \eqref{opt} given by
\begin{equation}\label{eq-1-1}
x_i (t+1) = \sum_{j=1}^N p_{ij} (t) \Big( x_j (t) - \delta_j (t) g_j (t)\Big),
\end{equation}
The convergence of the above algorithms were investigated for undirected graph under the assumption that $\{p_{ij}(t)\}$ is a symmetric and doubly-stochastic matrix. 

On the other hand, it is more practical to consider directed communications between agents under  certain environments. The network of agents is then represented by a directed graph.  To handle the distributed optimization on directed graph, Nedic-Olshevsky \cite{NO2} employed the push-sum protocol \cite{KDG} to design the subgradient-push method:
\begin{equation}\label{eq-1-2}
\begin{cases}
\displaystyle   w_i(t+1)=\sum_{j=1}^Na_{ij}(t)  x_j(t) \in\mathbb R^d,\quad i=1,\dots,N,\quad t=0,1,2,\dots,\\
\displaystyle  y_i(t+1)=\sum_{j=1}^Na_{ij}(t)  y_j(t) \in\mathbb R_+,\\
\displaystyle  z_i(t+1)=\frac{  w_i(t+1)}{  y_i(t+1)}\in\mathbb R^d,\\
\displaystyle   x_i(t+1)= w_i(t+1)- \delta_i(t+1)g_i(t+1),\quad g_i(t+1)\in\partial f_i(z_i(t+1)),
\end{cases}
\end{equation}
where $\{a_{ij}(t)\}$ is a column-stochastic matrix   used for the direct communication. 
When stated in terms of $w_i (t)$ and $y_i (t)$, \eqref{eq-1-2} can be reformulated in a more succinct form:
\begin{equation} 
\begin{cases}
\displaystyle   w_i(t+1)=\sum_{j=1}^Na_{ij}(t)  (w_j (t) -\delta_j (t) g_j (t)),\quad g_i (t) \in \partial f_i \left( \frac{w_i (t)}{y_i (t)}\right) \\
\displaystyle  y_i(t+1)=\sum_{j=1}^Na_{ij}(t)  y_j(t),\quad 1 \leq i \leq N, \quad t \geq 0.
\end{cases}
\end{equation}
This algorithm has been extended to various problems containing the stochastic distributed optimization \cite{NO3, TAHR}, online distributed optimization \cite{AGL}.

Recently the authors in \cite{MA} designed a distributed subgradient algorithm for directed graph by using row-stochastic matrix for communication
\begin{equation}\label{eq-1-3}
\begin{cases}
\displaystyle     x_i(t+1)=\sum_{j=1}^N  p_{ij}  x_j(t)- \frac{\delta(t)}{\bold e_i^\top z_{i}(t)}  g_i(t),\quad  g_i(t)\in\partial f_i(  x_i(t)),\\
\displaystyle  z_i(t+1)=\sum_{j=1}^N  p_{ij}  z_j(t),\quad z_i(0)=\bold e_i,\quad 1\leq i\leq N,\quad t\geq0. 
\end{cases}
\end{equation}
We also refer to \cite{MA2} for further details and the convergence analysis of the above algorithm.

The main goal of this paper is two-fold. First, we show that the algorithms \eqref{eq-1-1} - \eqref{eq-1-3} can be exploited in a unified way in the framework \eqref{frame}. More pecisely, we show that 
the above mentioned algorithms \eqref{eq-1-1}-\eqref{eq-1-3} can all be derived from \eqref{frame}
by making specific choices of the row-stochastic matrix $P(t)$ and the scalars $\delta_i (t)$ in \eqref{frame}. This 
unified perspective also enables us to design a new algorithm which resembles the subgradient-push method \cite{NO2}, but differs in the order of the consensus step and the gradient descent step:
\begin{equation}\label{eq-1-4}
\begin{cases}
\displaystyle  w_i(t+1)=\sum_{j=1}^N a_{ij}(t)w_j(t)-\theta_i(t)g_i(t),\quad g_i(t)\in\partial f_i\left(\frac{w_i(t)}{y_i(t)}\right),
\\
\displaystyle  y_i(t+1)=\sum_{j=1}^N a_{ij}(t)y_j(t),\quad1\leq i\leq N, \quad t\geq0,
\end{cases}
\end{equation}
where $A(t) = \{a_{ij}(t)\}$  are column stochastic. We refer to \cite{AS, Jakovetic, SHL, XPNK, XZSX, ZYGY} for unifications of gradient tracking type algorithms.


Secondly, as a result of such unified understanding of the above schemes, we provide a convergence proof that covers the optimization schemes \eqref{eq-1-1} - \eqref{eq-1-4} in a unified manner. The novelty of this convergence proof, aside from the unification itself, lies in providing the convergence of the distributed algorithm \eqref{frame} for which the matrices $P(t)$ are row-stochastic, but not necessarily column-stochastic. While the convergence analysis of \eqref{frame} is well-understood when $P(t)$ are doubly stochastic (both row-stochastic and column-stochastic) and the stepsize $\delta_i (t)$ satisfies a sutiable decaying property and is independent of $i$, i.e., $\delta_i (t) = \delta (t)$,  the convergence analysis of \eqref{frame}  for general row-stochastic  $P(t)$ is, to the best knowledge of the authors, not established yet. Moreover, when $P(t)$ is only row-stochastic and $\delta_i (t) = \delta (t)$, the distributed algorithm \eqref{frame} may fail to converge to an optimizer of the problem even though a suitable decaying property is given. 

We establish the convergence of \eqref{frame} to a minimizer for row-stochastic $P(t)$ under a general condition on the stepsize $\delta_i (t)$ which is associated to the sequence of absolute probability vectors \cite{Kolmogoroff}. We mention that the sequence of absolute probability vectors has been used importantly in the literature containing consensus algorithms \cite{NL} and distributed optimization algorithms \cite{SXK}.

 Another contribution of our convergence result is 
 that we successfully replace the conventional graph theoretic condition \cite{NO2} on the underlying graph with an algebraic condition on the consensus matrix in the convergence proof, and show that the latter  is strictly more general than the former. 
 More precisely, the previous works assumed that each vertex of graph $G(t)$ has a self-loop, and finite unions of time-varying graphs, 
\[
\bigcup_{t\leq s< t+T}G(s):=\left(V,\bigcup_{t\leq s< t+T}E(s)\right)\quad (\forall~t\geq0)  \hspace{0.6cm} \mbox{for some fixed }T\in\mathbb{N},
\]
are strongly connected. We replace these conditions with a strictly weaker non-vaninishing condition on the time-varying consensus matrices up to a finite product:
\[
 P(t+T,t):= P(t+T-1)P(t+T-2) \cdots P(t) >0\quad (\forall~t\geq0) \hspace{0,6cm} \mbox{for some fixed }T\in\mathbb{N}.
\]
We also provide an explicit example which satisfies the latter but not the former (see Section 4). Here $P(t+T, t)>0$ means that every element of $P(t+T, t)$ is positive. We note that our condition does not require the existence of self-loop at each vertex of graph $G(t)$, which was necessary in most of the previous works.


{\bf Notation.} Before we finish this introduction we set up notational convention that is kept throughout the paper.
\begin{itemize}
\item For $1\leq p\leq\infty$, $v=(v_1,\dots,v_d)^\top \in \mathbb R^{d \times1}$ and $A=(a_{ij})\in\mathbb R^{m\times n}$, define
\[
\|v\|_p:=\left(\sum_{l=1}^d|v_l|^p\right)^\frac{1}{p}\quad(1\leq p<\infty),\quad \|v\|_\infty=\max_{1\leq l\leq d}|v_l|,\quad \|A\|_p:=\sup_{x\neq0}\frac{\|Ax\|_p}{\|x\|_p}\quad(1\leq p\leq \infty).
\]
Note that
\[
\|v\|_2\leq \|v\|_1,\quad \|A\|_2=\|A^\top\|_2,\quad \|A\|_\infty=\max_{1\leq i\leq n}\sum_{j=1}^n|a_{ij}|,\quad \|A\|_1=\max_{1\leq j\leq n}\sum_{i=1}^n|a_{ij}|.
\]
\item Unless a specification is needed, we use $\|\cdot\|$ generically to denote any sub-multiplicative matrix norm throughout the paper: If we write a statement with this norm, then it means that it holds for any sub-multiplicative matrix norm since they are all equivalent. 
\item Lastly, we denote by $I_N$, $O_N$ and $1_{N}$ the $N\times N$ identity matrix, the $N\times N$ zero matrix and the $N\times1$ matrix whose entries are all 1, respectively.
\end{itemize}
The paper is organized as follows.  In Section 2, we derive the algorithms \eqref{eq-1-1}-\eqref{eq-1-4} from the distributed algorithm \eqref{frame} choosing suitable parameters. In Section 3, we review some fundamental properties of row-stochastic matrices. In Section 4, we state the convergence result for \eqref{frame} and discuss the assumptions used in the convergence result. In Section 5, we apply the result of Section 4 to derive the convergence estimates of the algorithms \eqref{eq-1-1}-\eqref{eq-1-4}. Section 6 is devoted to give the proof of the convergence result stated in Section 4. 

\section{Derivation of distributed algorithms from \eqref{frame}}
In this section, we show that  the distributed algorithms \eqref{eq-1-1}-\eqref{eq-1-3} 
can be recovered from \eqref{frame} by making specific choices of $P(t)$ and $\delta_i (t)$. For this purpose, we write \eqref{frame} as
\begin{equation}\label{frame2}
 X(t+1)=P(t)X(t)-\Delta(t)G(t),\quad t\geq0,
\end{equation}
with the following notations:
\begin{equation}
\begin{array}{ll}
X(t)=[x_1(t)\dots x_N(t)]^\top\in\mathbb R^{N\times d},\quad &P(t) =(p_{ij}(t))\in\mathbb R^{N\times N},
\\
\Delta(t) =\operatorname{diag}(\delta_1(t),\dots,\delta_N(t))\in\mathbb R^{N\times N},\quad  &G(t) =[g_1(t)\dots g_N(t)]^\top\in\mathbb R^{N\times d}.
\end{array}
\end{equation}

\begin{exm}\label{E3.1} {\bf Derivation of \eqref{eq-1-1}}:
	We divide \eqref{frame2} in odd and even cases:
	\begin{align}\label{eo}
	\begin{split}
  X(2t+1)&=P(2t)X(2t)-\Delta(2t)G(2t),	\\
   X(2t+2)&=P(2t+1)X(2t+1)-\Delta(2t+1)G(2t+1).
  \end{split}
	\end{align}
We choose $P(2t)\equiv I_N$ in $\eqref{eo}_1$ to get
\begin{align}\label{eo1}
\begin{split}
X(2t+1)&=X(2t)-\Delta(2t)G(2t).
\end{split}
\end{align}
On the other hand, we set $\Delta(2t+1)\equiv O_N$ to reduce $\eqref{eo}_2$ into
	\begin{align}\label{eo2}
\begin{split}
X(2t+2)&=P(2t+1)X(2t+1).
\end{split}
\end{align}
Inserting \eqref{eo1} into \eqref{eo2}, we obtain
\begin{align*}
\begin{split}
X(2t+2)&=P(2t+1)\big(X(2t)-\Delta(2t)G(2t)\big).
\end{split}
\end{align*}
Finally, we make the following choices of $P(t)$, $\Delta(t)$, and $G(t)$: 
\begin{align*}
&\tilde P(t)=(\tilde p_{ij}(t)):=P(2t+1),\quad \tilde\Delta(t)=\operatorname{diag}(\tilde\delta_1(t),\dots,\tilde\delta_N(t)):=\Delta(2t),\\
&\hspace{3cm} \tilde G(t):=[\tilde g_1(t)\dots \tilde g_N(t)]^\top:=G(2t).
\end{align*}
and introduce the new optimizing variable:
\begin{equation*}
\tilde X(t)=[\tilde x_1(t)\dots \tilde x_N(t)]^\top:=X(2t),
\end{equation*}
to obtain the distributed algorithm \eqref{eq-1-1}:
\[
\tilde X(t+1)=\tilde P(t)\big(\tilde X(t)-\tilde\Delta(t)\tilde G(t)\big),\quad t\geq0,
\]
or equivalently,
\begin{equation*}
  \tilde x_i(t+1)=\sum_{j=1}^N\tilde p_{ij}(t) (\tilde x_j(t)- \delta_j(t)\tilde g_i(t)),\quad \tilde g_i(t)\in\partial f_i(\tilde x_i(t)),\quad 1\leq i\leq N,\quad t\geq0. 
\end{equation*}
\end{exm}

\begin{exm} \label{E3.2} {\bf Derivation of \eqref{eq-1-3}:}
We let the matrix $P(t)$ of  \eqref{frame2} independent  of time, i.e., 
\[
P(0)=P(1)=P(2)=\dots=:P,
\]
and introduce a new variable $Z(t)$ defined as the $t$-th power of $P$ with the convention $P^0:=I_N$:
\[
Z(t)=[z_1(t)\dots z_N(t)]^\top:=P^{t},\quad\mbox{with}\quad z_i(t)=[z_{i1}(t)\dots z_{iN}(t)]^\top,\quad 1\leq i\leq N,\quad t\geq0.
\]
Then, we have
\begin{align}\label{Z}
Z(t+1)=P^{t+1}=PP^t=PZ(t).
\end{align}
We then choose our new step-size by
\[
\delta_i(t)\equiv\frac{\delta(t)}{\mbox{$i$-th diagonal entry of }P^{t}},\quad t\geq0,
\]
for some scalars $\delta(t)$ $(t\geq0)$, to get from \eqref{frame2} that
\begin{align}\label{X}
X(t+1)=PX(t)-\frac{\delta(t)}{P^t_{ii}}\,G(t).
\end{align}
From \eqref{Z} and \eqref{X}, we derive the distributed algorithm \eqref{eq-1-3}:
\begin{equation*}
\begin{cases}
\displaystyle     x_i(t+1)=\sum_{j=1}^N  p_{ij}  x_j(t)- \frac{\delta(t)}{z_{ii}(t)}  g_i(t),\quad  g_i(t)\in\partial f_i(  x_i(t)),\\
\displaystyle  z_i(t+1)=\sum_{j=1}^N  p_{ij}  z_j(t),\quad z_i(0)=\bold e_i,\quad 1\leq i\leq N,\quad t\geq0,
\end{cases}
\end{equation*}
where $\bold e_i\in\mathbb R^N$ denotes the $i$-th standard unit vector.
\end{exm}

\begin{exm}\label{E3.3} {\bf Derivation of a new algorithm:}
In this example, we propose a new algorithm that generalizes  the subgradient-push method in \cite{NO2} (see Example \ref{E3.4}). Let the sequence $\{A(t)\}_{t\geq0}$ of $N\times N$ column-stochastic matrices and the sequence $\{Y(t)\}_{t\geq0}$ of $N\times1$ matrices satisfying
\[
Y(t+1)=A(t)Y(t),\quad Y(t)>0,\quad t\geq0
\]
be given. Here we remark that one may replace the condition ``$Y(t)>0$ $(\forall~t\geq0)$'' by  a stronger condition ``each $A(t)$ $(t\geq0)$ has no zero row and $Y(0)>0$''. 
Indeed, we have 
\[
\min_{1\leq i\leq N}y_i(t+1)=\min_{1\leq i\leq N}\left(\sum_{j=1}^N a_{ij}(t)y_j(t)\right)\geq \min_{1\leq i\leq N}\left(\sum_{j=1}^N a_{ij}(t)\right)\cdot\left(\min_{1\leq k\leq N}y_k(t)\right),
\]
and by the assumption that $A(t)$ has no zero row, we have
\[
\min_{1\leq i\leq N}\left(\sum_{j=1}^N a_{ij}(t)\right)>0.
\]
 So
we can prove $Y(t)>0$ by inducting on $t$.

We then make the following choice for $\{P(t)\}_{t\geq0}$:
\[
P(t)\equiv \operatorname{diag}(Y(t+1))^{-1} A(t)\operatorname{diag}(Y(t)),\quad t\geq0,
\]
which is row-stochastic by construction:
\begin{equation}\label{eq-3-1}
\begin{split}
P(t)1_{N}&=\operatorname{diag}(Y(t+1))^{-1} A(t)\operatorname{diag}(Y(t))1_{N}\\
&=\operatorname{diag}(Y(t+1))^{-1} A(t) Y(t) \\
&=\operatorname{diag}(Y(t+1))^{-1}   Y(t+1) \\
&=1_{N}.
\end{split}
\end{equation}
We choose this $P(t)$ in \eqref{frame2} and, finally we multiply  $\operatorname{diag}(Y(t+1))$ to the left of both sides of \eqref{frame2}
\begin{equation*}
\operatorname{diag}(Y(t+1))X(t+1)=A(t)\operatorname{diag}(Y(t))X(t)-\operatorname{diag}(Y(t+1))\Delta(t)G(t),\quad t\geq0,
\end{equation*}
 and introduce
\[
W(t) :=\operatorname{diag}(Y(t))X(t),\quad \Theta(t):=\operatorname{diag}(Y(t+1))\Delta(t),\quad   t\geq0
\]
to get
\begin{equation}\label{choen}
\begin{cases}
\displaystyle  Y(t+1)=A(t)Y(t), \\
\displaystyle  W(t+1)=A(t)W(t)-\Theta(t)G(t),  \quad t\geq0.
\end{cases}
\end{equation}
If we denote
\begin{equation}
\begin{array}{ll}
W(t)=[w_1(t)\dots w_N(t)]^\top,\quad & Y(t)=[y_1(t)\dots y_N(t)]^\top,\quad
\\
\Theta(t)=\operatorname{diag}(\theta_1(t)\dots\theta_N(t)),\quad &A(t)=(a_{ij}(t)),
\end{array}
\end{equation}
then we can rewrite \eqref{choen} as
\begin{equation}\label{algorithm3}
\begin{cases}
\displaystyle  y_i(t+1)=\sum_{j=1}^N a_{ij}(t)y_j(t),\quad1\leq i\leq N, \quad t\geq0, \\
\displaystyle  w_i(t+1)=\sum_{j=1}^N a_{ij}(t)w_j(t)-\theta_i(t)g_i(t),\quad g_i(t)\in\partial f_i\left(\frac{w_i(t)}{y_i(t)}\right).
\end{cases}
\end{equation}

\end{exm}

\begin{exm}\label{E3.4}{\bf Derivation of \eqref{eq-1-2}}
As in the case of Example \ref{E3.1}, we divide \eqref{choen} in the Example \ref{E3.3} into odd and even cases as 
\begin{align}\label{eo3}
	\begin{split}
  Y(2t+1)&=A(2t)Y(2t)	\\
   W(2t+1)&=A(2t)W(2t)-\Theta(2t)G(2t) 
   \\
  Y(2t+2)&=A(2t+1)Y(2t+1)	\\
   W(2t+2)&=A(2t+1)W(2t+1)-\Theta(2t+1)G(2t+1)
  \end{split}
	\end{align}
and set
\[
A(2t)=I_N,\quad \Delta(2t+1)=O_N,\quad t\geq0.
\]
It leads to
\begin{align}\label{eo3}
	\begin{split}
  Y(2t+1)&= Y(2t)	\\
   W(2t+1)&= W(2t)-\Theta(2t)G(2t)  
   \\
  Y(2t+2)&= A(2t+1)Y(2t+1)	\\
   W(2t+2)&=A(2t+1) W(2t+1),
  \end{split}
	\end{align}
	which is reduced to
\begin{align}
	\begin{split}
  Y(2t+2)&= A(2t+1)Y(2t)	\\
   W(2t+2)&= A(2t+1)(W(2t)-\Theta(2t)G(2t)).
  \end{split}
	\end{align}
This, together with the following choices:
\begin{equation}
\begin{array}{ll}
\hat X(t)=[\hat x_1(t)\dots \hat x_N(t)]^\top:=W(2t+1),\quad & 
\hat G(t):=[\hat g_1(t)\dots \hat g_N(t)]^\top:=G(2t),
\\
\hat Y(t)=[\hat y_1(t)\dots \hat y_N(t)]^\top:=Y(2t),\quad & \hat A(t)=(\hat a_{ij}(t)):=A(2t+1),
\\
\hat\Theta(t)=\operatorname{diag}(\hat\theta_1(t),\dots,\hat\theta_N(t)):=\Theta(2t),
\quad& \hat W(t)=[\hat w_1(t)\dots \hat w_N(t)]^\top:=W(2t),
\end{array}
\end{equation}
lead to the subgradient-push method in \cite{NO2} 
\begin{align}
	\begin{split}
  \hat{Y}(t+1)&= \hat{A} (t) \hat{Y}(t)	\\
   \hat{W}(t+1)&= \hat{A}(t)(\hat{W}(t)-\hat{\Theta}(t)\hat{G}(t)),
  \end{split}
	\end{align}
which can be written as
\begin{equation*}
\begin{cases}
\displaystyle\hat  y_i(t+1)=\sum_{j=1}^N\hat a_{ij}(t)\hat  y_j(t) \in\mathbb R_+,\\
\displaystyle {\hat  x_i(t)=\hat  w_i(t)- \hat \theta_i(t)\hat g_i(t),\quad \hat  g_i(t)\in\partial f_i\left(\frac{\hat  w_i(t)}{\hat y_i(t)}\right),}\\
\displaystyle \hat  w_i(t+1)=\sum_{j=1}^N\hat a_{ij}(t)\hat  x_j(t) \in\mathbb R^d.
\end{cases}
\end{equation*}
In Table 1, we provide a systematic summary of this section.
\end{exm}
\begin{table}\label{tab}
\centering
\caption{\bf Summary of the derivations of optimization algorithms}
  \begin{tabular}{ | p{6cm} | p{7cm} | p{1.3cm} | }
   \hline
   Choice of $P(t)$, $\delta_i (t)$, $x_i(t)$ in \eqref{frame}& Corresponding algorithm & Ref. \\[0.1cm]
   \hline
     \smallskip 
     \vtop{\hbox{\strut      $P (t) =P(t)$}\hbox{\strut$\delta_i(t)=\delta(t)$}\hbox{\strut  $x_i (t) = x_i (t)$ }}     &     \smallskip \vtop{\hbox{${x}_i (t+1) = \sum_{j=1}^N {p}_{ij} (t) {x}_j (t) - \delta (t) g_i(t)$}\hbox{$  g_i(t)\in\partial f_i\left(x_i(t)\right)$}\hbox{${P}$: doubly-stochastic}} &      \smallskip \cite{NO} \\[1cm]
    \hline 
         \smallskip
     \vtop{\hbox{\strut      $P (2t) =I_N$, $P(2t+1) = \tilde{P}(t)$} \hbox{\strut $\delta_i (2t+1) =0$, $\delta_i (2t) = \tilde\delta_i (t)$ }\hbox{\strut  $x_i (2t) = \tilde{x}_i (t)$ }\hbox{\strut \medskip $x_{i} (2t+1) = \tilde{x}_i (t) - \tilde\delta_i (t) \tilde{g}_i (t)$ }}
     &      \smallskip \vtop{\hbox{$\tilde{x}_i (t+1) = \sum_{j=1}^N \tilde{p}_{ij} (t) \Big(\tilde{x}_j (t) - \tilde\delta_{j} (t) \tilde g_j(t)\Big)$} \hbox{$ \tilde g_i(t)\in\partial f_i\left(\tilde x_i(t)\right)$}\hbox{$\tilde{P}$: doubly-stochastic}} &     \smallskip \cite{JKJJ} \\[2cm]
    \hline      \smallskip
  \vtop{\hbox{\strut $P(t) \equiv P$}\hbox{\strut $\delta_i (t) = {\delta (t)}/{z_{ii}(t)}$}}\hbox{$x_i (t) = x_i (t)$} &     \smallskip \vtop{\hbox{\strut  $x_i (t+1) = \sum_{j=1}^N p_{ij} x_j (t) - \frac{\delta(t)}{z_{ii}(t)} g_i(t)$} \hbox{\strut $z_{i}(t+1) = \sum_{j=1}^N p_{ij}z_j (t)$}\hbox{$  g_i(t)\in\partial f_i\left(x_i(t)\right)$}\hbox{${P}$: row-stochastic}} &      \smallskip\cite{MA} \\[2cm]
   \hline      \smallskip
   \vtop{\hbox{\strut $P(t) \equiv \textrm{diag}(Y(t+1))^{-1} A(t) \textrm{diag}(Y(t))$}\hbox{\strut $\delta_i (t) = {\theta_i (t)}/{y_{i}(t)}$}} \hbox{$x_i (t) = w_i (t)/y_i (t)$}&     \smallskip \vtop{\hbox{\strut $w_i (t+1) = \sum_{j=1}^N a_{ij} (t) w_j (t) - \theta_i (t) g_i(t)$}\hbox{\strut $y_i (t+1) =\sum_{j=1}^N a_{ij} (t) y_j (t)$}\hbox{$  g_i(t)\in\partial f_i\left(\frac{   w_i(t )}{  y_i(t )}\right)$}
\hbox{$A$: column-stochastic}} &     \smallskip This paper \\[1cm]
   \hline      \smallskip 
   \vtop{\hbox{\strut $P(2t)=I_N$} \hbox{\strut $P(2t+1) \equiv \textrm{diag}(\hat Y(t+1))^{-1} \hat A(t) \textrm{diag}(\hat Y(t))$}\hbox{\strut $\delta_i (2t) = {\hat{\theta}_i (t)}/{\hat{y}_{i}(t)}$, $\delta_i (2t+1)=0$}}  \hbox{$x_i (2t) = \hat{w}_i (t)/\hat{y}_i (t)$}\hbox{$x_i (2t+1) = \frac{\hat{w}_i (t)}{\hat{y}_i (t)}- \frac{\hat{\theta}_i (t)}{\hat{y}_{i}(t)} \hat g_i(t)$}&     \smallskip \vtop{\hbox{\strut $\hat{w}_i (t+1) = \sum_{j=1}^N \hat{a}_{ij} (t) \hat{x}_j (t)$}\hbox{\strut $\hat{y}_i (t+1) =\sum_{j=1}^N \hat{a}_{ij} (t) \hat{y}_j (t)$}\hbox{$\hat{x}_i (t+1) = \hat{w}_i (t+1) - \hat{\theta}_i \hat g_i(t+1)$} \hbox{$\hat  g_i(t+1)\in\partial f_i\left(\frac{\hat  w_i(t+1)}{\hat y_i(t+1)}\right)$}\hbox{$A$: column-stochastic}} &     \smallskip\cite{NO2} \\[1cm]
   \hline
  \end{tabular}
\centering
  \end{table}

\section{Properties of row-stochastic matrices}

In this section, we study some properties of ergodic sequences of row-stochastic matrices, which is crucially used in the sequel. 
%

\begin{defn}\mbox{~}
\begin{enumerate}
\item A matrix $A$ is \textit{non-negative (positive)} if all of its entries are  non-negative (positive), and we write $A\geq0~(>0)$.
\item A non-negative square matrix is \textit{row-stochastic} if all of its row has sum equal to $1$.
\item A non-negative vector is a \textit{probability vector} if the sum of its entries is equal to $1$.
\item A sequence $\{P(t)\}_{t\geq0}$  of row-stochastic matrices is \textit{ergodic} if there exists a sequence $\{v(t)\}_{t\geq0}$ of probability vectors satisfying
\[
\lim_{t\to\infty}P(t)P(t-1)\dots P(t_0)=1_{N}v(t_0)^\top,\quad \forall~t_0\geq0.
\]
\item A sequence $\{\pi(t)\}_{t\geq0}$ of probability vectors is a \textit{set of absolute probability vectors} for the sequence $\{P(t)\}_{t\geq0}$  of row stochastic matrices if
\begin{equation}\label{eq-2-1}
\pi(t+1)^\top P(t)=\pi(t)^\top,\quad t\geq0.
\end{equation}
\end{enumerate}
\end{defn}

The following proposition establishes a relation between ergodic sequences and sets of absolute probability vectors.
\begin{prop}[\cite{Kolmogoroff}]\label{P2.2}
Let $\{P(t)\}_{t\geq0}$ be a sequence of $N\times N$ row stochastic matrices, and let $\{\pi(t)\}_{t\geq0}$ be a sequence of $N\times1$ probability vectors. Then the following are equivalent.
\begin{enumerate}
\item $\{P(t)\}_{t\geq0}$ is ergodic and satisfies
\[
\lim_{t\to\infty}P(t)P(t-1)\dots P(t_0)=1_{N}\pi(t_0)^\top,\quad \forall~t_0\geq0.
\]
\item $\{\pi(t)\}_{t\geq0}$ is a unique set of absolute probability vectors for $\{P(t)\}_{t\geq0}$.
\end{enumerate}
\end{prop}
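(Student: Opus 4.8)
The plan is to prove the two implications separately, treating $(1)\Rightarrow(2)$ as the routine direction and $(2)\Rightarrow(1)$ as the one that carries the real content. Throughout I would write $\Phi(t,t_0):=P(t)P(t-1)\cdots P(t_0)$ for the forward product; since each factor is row-stochastic, so is $\Phi(t,t_0)$, and I would record the single factorization $\Phi(t,t_0)=\Phi(t,t_0+1)P(t_0)$, which is used in both directions.

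For $(1)\Rightarrow(2)$, I would first pass to the limit $t\to\infty$ in $\Phi(t,t_0)=\Phi(t,t_0+1)P(t_0)$. The ergodic hypothesis turns this into $1_N\pi(t_0)^\top=1_N\pi(t_0+1)^\top P(t_0)$, and comparing rows (every row of $1_N v^\top$ equals $v^\top$) gives $\pi(t_0)^\top=\pi(t_0+1)^\top P(t_0)$, so $\{\pi(t)\}$ is an absolute probability sequence. For uniqueness I would take any absolute probability sequence $\{\tilde\pi(t)\}$, iterate its defining relation to obtain $\tilde\pi(t_0)^\top=\tilde\pi(t+1)^\top\Phi(t,t_0)$ for all $t\ge t_0$, and let $t\to\infty$. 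Splitting $\tilde\pi(t+1)^\top\Phi(t,t_0)=\tilde\pi(t+1)^\top 1_N\pi(t_0)^\top+\tilde\pi(t+1)^\top\big(\Phi(t,t_0)-1_N\pi(t_0)^\top\big)$, the first term equals $\pi(t_0)^\top$ because $\tilde\pi(t+1)^\top 1_N=1$, while the second tends to $0$ since $\tilde\pi(t+1)$ has unit $\ell_1$-norm and $\Phi(t,t_0)-1_N\pi(t_0)^\top\to O_N$. Hence $\tilde\pi(t_0)=\pi(t_0)$ for every $t_0$.

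The main obstacle is $(2)\Rightarrow(1)$, where the hypothesis supplies only existence and uniqueness of an absolute probability sequence and I must upgrade this to convergence of the products. My plan is a compactness-and-identification argument. Since the $N\times N$ row-stochastic matrices form a compact set, a diagonal extraction over the countably many indices $t_0\in\mathbb N$ yields a single subsequence $t_k\to\infty$ along which $Q(t_0):=\lim_k\Phi(t_k,t_0)$ exists for every $t_0$, with each $Q(t_0)$ row-stochastic. Passing to the limit in $\Phi(t_k,t_0)=\Phi(t_k,t_0+1)P(t_0)$ gives the consistency relation $Q(t_0)=Q(t_0+1)P(t_0)$. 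The key observation is that, read row by row, each fixed row $q_i(t_0)^\top:=e_i^\top Q(t_0)$ is a sequence of probability vectors satisfying $q_i(t_0)^\top=q_i(t_0+1)^\top P(t_0)$, hence is itself an absolute probability sequence for $\{P(t)\}$. By the assumed uniqueness, $q_i(t_0)=\pi(t_0)$ for every $i$ and every $t_0$, so $Q(t_0)=1_N\pi(t_0)^\top$.

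Finally I would remove the dependence on the particular subsequence: the same reasoning applies to any subsequence of $\{\Phi(t,t_0)\}_t$ (extract a further convergent sub-subsequence by diagonalization and rerun the argument), and always produces the limit $1_N\pi(t_0)^\top$. Thus every subsequence admits a further subsequence converging to $1_N\pi(t_0)^\top$, which forces the full sequence to converge, $\Phi(t,t_0)\to 1_N\pi(t_0)^\top$; this is exactly ergodicity with the stated limit. The two delicate points to watch are the simultaneous (diagonal) extraction that makes $Q(\cdot)$ well-defined for all $t_0$ at once, and the identification of the rows of $Q(t_0)$ as genuine absolute probability sequences, since it is precisely there that the uniqueness hypothesis does all the work.
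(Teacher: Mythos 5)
Your proof is correct, but there is nothing in the paper to compare it against: Proposition \ref{P2.2} is stated as a known result with an attribution to Kolmogorov \cite{Kolmogoroff}, and the paper supplies no proof of its own. Judged on its own terms, your argument is sound in both directions. For $(1)\Rightarrow(2)$, passing to the limit in the factorization $\Phi(t,t_0)=\Phi(t,t_0+1)P(t_0)$ and comparing rows yields exactly the defining relation \eqref{eq-2-1}, and your splitting
\[
\tilde\pi(t+1)^\top\Phi(t,t_0)=\pi(t_0)^\top+\tilde\pi(t+1)^\top\left(\Phi(t,t_0)-1_{N}\pi(t_0)^\top\right)
\]
correctly uses $\|\tilde\pi(t+1)\|_1=1$ to kill the error term and obtain uniqueness. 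For $(2)\Rightarrow(1)$, the three ingredients fit together without gaps: the diagonal extraction over the countably many $t_0$ inside the compact set of row-stochastic matrices makes $Q(\cdot)$ well defined along one subsequence; the consistency relation $Q(t_0)=Q(t_0+1)P(t_0)$, read row by row, exhibits each $q_i(\cdot)$ as an absolute probability sequence, which the uniqueness hypothesis forces to equal $\pi(\cdot)$, so $Q(t_0)=1_{N}\pi(t_0)^\top$; and since the argument applies to an arbitrary subsequence, the standard subsequence principle in a compact metric space upgrades this to convergence of the full products. This compactness-plus-identification scheme is essentially the classical route to Kolmogorov's theorem, so your writeup has the merit of making the paper self-contained at a point where it currently leans entirely on an external reference.
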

Next, we introduce a useful tool to study ergodicity of sequences of row-stochastic matrices.



\begin{defn}[\cite{Dobrushin1, Dobrushin2}]
For each $N\times N$ row-stochastic matrix $P=(p_{ij})$, we define its \textit{ergodicity coefficient}   by
\begin{equation}\label{eq-2-10}
\tau(P):=\frac{1}{2}\max_{i_1,i_2} \sum_{j=1}^N|p_{i_1j}-p_{i_2j}|= 1-\min_{i_1,i_2} \sum_{j=1}^N\min\{p_{i_1j},p_{i_2j}\}.
\end{equation}
\end{defn}
The identity \eqref{eq-2-10} can be derived in the following way:
\begin{align*}
\frac{1}{2}\max_{i_1,i_2} &\sum_{j=1}^N|p_{i_1j}-p_{i_2j}|=\frac{1}{2}\max_{i_1,i_2} \sum_{j=1}^N\left(p_{i_1j}+p_{i_2j}-2\min\{p_{i_1j},p_{i_2j}\}\right)\\
&=\frac{1}{2}\max_{i_1,i_2}\left(2-2 \sum_{j=1}^N \min\{p_{i_1j},p_{i_2j}\}\right) =1-\min_{i_1,i_2} \sum_{j=1}^N\min\{p_{i_1j},p_{i_2j}\}.
\end{align*} 

For any $N\times N$ row-stochastic matrix $P$, it is clear from the definition that $0\leq \tau(P)\leq 1$, and that
\[
\tau(P)=0 \quad\mbox{if and only if}\quad P=1_{N}v^\top\quad\mbox{for some}\quad v\in\mathbb R^{N\times1},
\]
which reveals a close relationship between $\tau$ and ergodicity.
We finish this section by introducing some useful properties of $\tau$.

\begin{lem}[\cite{Dobrushin1, Dobrushin2}]\label{L2.4}
\begin{enumerate}\mbox{~}
\item
For any row-stochastic matrix $P$, we have
\[
\tau(P)=\sup_{\substack{u\neq0,\\u^\top 1_{N}=0}}\frac{\|P^\top u\|_1}{\|u\|_1}
\]
\item
For any row-stochastic matrices $P_1,P_2$ of the same size, we have
\[
\tau(P_1P_2)\leq \tau(P_1)\tau(P_2).
\]
\end{enumerate}
\end{lem}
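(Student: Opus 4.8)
The plan is to establish part (1) first, since part (2) follows from it in a few lines. For part (1), by the homogeneity of $u\mapsto\|P^\top u\|_1/\|u\|_1$ it suffices to produce the bound $\|P^\top u\|_1\le\tau(P)\|u\|_1$ for every $u$ with $u^\top 1_N=0$, together with a single $u$ achieving equality. Writing $u=u^+-u^-$ for the positive and negative parts, the constraint $u^\top 1_N=0$ forces $\sum_i u_i^+=\sum_i u_i^-=:s$ and $\|u\|_1=2s$ (and $s>0$ unless $u=0$). Denoting by $r_i:=P^\top e_i=(p_{i1},\dots,p_{iN})^\top$ the $i$-th row of $P$, one has $P^\top u=\sum_i u_i r_i$, and inserting the factors $s^{-1}\sum_j u_j^-$ and $s^{-1}\sum_i u_i^+$ (both equal to $1$) yields the key rewriting
\[
P^\top u=\frac1s\sum_{i,j}u_i^+u_j^-\,(r_i-r_j).
\]

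From here the estimate is immediate: the triangle inequality and the bound $\|r_i-r_j\|_1=\sum_{k}|p_{ik}-p_{jk}|\le 2\tau(P)$ (directly from the definition of $\tau$) give
\[
\|P^\top u\|_1\le\frac1s\sum_{i,j}u_i^+u_j^-\,\|r_i-r_j\|_1\le\frac{2\tau(P)}{s}\Big(\sum_i u_i^+\Big)\Big(\sum_j u_j^-\Big)=2s\,\tau(P)=\tau(P)\,\|u\|_1,
\]
so the supremum is at most $\tau(P)$. For the reverse inequality I would take $u=\tfrac12(e_{i_1}-e_{i_2})$ for the pair $(i_1,i_2)$ attaining the maximum in the definition of $\tau$; then $u^\top 1_N=0$, $\|u\|_1=1$, and $\|P^\top u\|_1=\tfrac12\sum_k|p_{i_1k}-p_{i_2k}|=\tau(P)$, which shows the supremum equals $\tau(P)$ and proves (1).

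For part (2), I first note that $P_1P_2$ is row-stochastic (since $(P_1P_2)1_N=P_1(P_21_N)=P_11_N=1_N$), so $\tau(P_1P_2)$ is well defined and part (1) applies to it. Given $u\neq0$ with $u^\top 1_N=0$, set $v:=P_1^\top u$; the crucial point is that $v$ again lies in the hyperplane $\{v:v^\top 1_N=0\}$, because $v^\top 1_N=u^\top P_1 1_N=u^\top 1_N=0$ by row-stochasticity of $P_1$. Hence part (1) may be applied to both factors: $\|P_2^\top v\|_1\le\tau(P_2)\|v\|_1$ and $\|v\|_1=\|P_1^\top u\|_1\le\tau(P_1)\|u\|_1$. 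Chaining these and using $(P_1P_2)^\top=P_2^\top P_1^\top$ gives $\|(P_1P_2)^\top u\|_1\le\tau(P_1)\tau(P_2)\|u\|_1$; taking the supremum over admissible $u$ and invoking part (1) for $P_1P_2$ yields $\tau(P_1P_2)\le\tau(P_1)\tau(P_2)$.

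The only genuinely substantive step is the algebraic identity for $P^\top u$ in part (1); once that decomposition into row-differences $r_i-r_j$ is in hand, both the bound in (1) and the submultiplicativity in (2) are routine, the latter hinging entirely on the stability of the constraint $u^\top 1_N=0$ under multiplication by a row-stochastic matrix.
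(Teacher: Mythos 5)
Your proof is correct, and it is worth noting that the paper itself offers no proof of this lemma at all: it is imported as a classical result with a citation to Dobrushin (and it can also be found in Seneta's book, cited as \cite{Seneta} in the paper). So there is no "paper proof" to compare against; what you have supplied is essentially the standard classical argument that those references contain. Your key identity
\[
P^\top u=\frac{1}{s}\sum_{i,j}u_i^+u_j^-\,(r_i-r_j),\qquad s=\sum_i u_i^+=\sum_j u_j^-,
\]
valid precisely because $u^\top 1_N=0$ splits $u$ into positive and negative parts of equal mass, is the crux; the upper bound then follows from $\|r_i-r_j\|_1\le 2\tau(P)$ and the lower bound from the explicit witness $u=\tfrac12(e_{i_1}-e_{i_2})$ at a maximizing pair, so the supremum is in fact attained. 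Part (2) is handled exactly as one should: the hyperplane $\{u : u^\top 1_N=0\}$ is invariant under $u\mapsto P_1^\top u$ because $P_1$ is row-stochastic, so the variational characterization from (1) can be chained (with the harmless degenerate case $P_1^\top u=0$ absorbed, since both sides vanish there). This self-contained argument is a genuine addition relative to the paper, which relies on the reader's trust in the cited literature; it buys transparency at the cost of half a page, and it uses nothing beyond the triangle inequality and the definition \eqref{eq-2-10} of $\tau$.
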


\begin{lem}[\cite{CS}]\label{L2.5}
A sequence $\{P(t)\}_{t\geq0}$ of $N\times N$ row stochastic matrices is ergodic if and only if
\[
\lim_{t\to\infty}\tau(P(t)P(t-1)\dots P(t_0))=0,\quad \forall~t_0\geq0.
\]
\end{lem}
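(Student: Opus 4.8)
The plan is to prove both implications, treating the ``only if'' direction (ergodic $\Rightarrow\tau\to0$) as essentially immediate and concentrating on the converse. Throughout I write $\Pi(t,t_0):=P(t)P(t-1)\cdots P(t_0)$ for $t\geq t_0$; each $\Pi(t,t_0)$ is row-stochastic, and $\Pi(t+1,t_0)=P(t+1)\Pi(t,t_0)$.

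For the forward direction, suppose $\{P(t)\}_{t\geq0}$ is ergodic, so that $\Pi(t,t_0)\to 1_{N}v(t_0)^\top$ as $t\to\infty$ for each $t_0$. The map $P\mapsto\tau(P)$ is a maximum of finitely many functions that are continuous (indeed piecewise linear) in the entries of $P$, hence continuous; and $\tau(1_{N}v^\top)=0$ by the remark preceding the lemma. Passing to the limit therefore gives $\tau(\Pi(t,t_0))\to\tau(1_{N}v(t_0)^\top)=0$. (Alternatively, one may simply note that all rows of $\Pi(t,t_0)$ converge to the common vector $v(t_0)$, so their pairwise $\ell^1$ differences vanish.)

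For the converse, fix $t_0$ and, for each column index $j$, set $m_j(t):=\min_i [\Pi(t,t_0)]_{ij}$ and $M_j(t):=\max_i[\Pi(t,t_0)]_{ij}$. The key structural fact is that left-multiplication by a row-stochastic matrix replaces each row by a convex combination of rows: since $\Pi(t+1,t_0)=P(t+1)\Pi(t,t_0)$ with $P(t+1)$ row-stochastic, every row of $\Pi(t+1,t_0)$ is a convex combination of the rows of $\Pi(t,t_0)$. Consequently $m_j(t)\leq m_j(t+1)\leq M_j(t+1)\leq M_j(t)$, so $m_j(\cdot)$ is nondecreasing and $M_j(\cdot)$ is nonincreasing; being bounded in $[0,1]$, both converge. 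Finally, choosing the two rows attaining the column max and min and invoking the definition of $\tau$, one gets $0\leq M_j(t)-m_j(t)\leq 2\tau(\Pi(t,t_0))\to0$, so $m_j(t)$ and $M_j(t)$ share a common limit $v_j(t_0)$. Hence every entry of the $j$-th column of $\Pi(t,t_0)$ converges to $v_j(t_0)$, i.e. $\Pi(t,t_0)\to 1_{N}v(t_0)^\top$, and the limit $v(t_0)$ is a probability vector as a limit of rows of row-stochastic matrices. Since $t_0$ was arbitrary, $\{P(t)\}_{t\geq0}$ is ergodic.

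The only real content lies in the converse, where the task is to upgrade the \emph{weak} statement that the rows of $\Pi(t,t_0)$ grow arbitrarily close (which is all $\tau\to0$ literally asserts) to the \emph{strong} statement that they actually converge. The monotonicity of the column extrema $m_j,M_j$ under left-multiplication by row-stochastic matrices is precisely the mechanism that effects this, turning a contraction estimate into a genuine squeeze argument; this is the step to get right, and everything else is bookkeeping.
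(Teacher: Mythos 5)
Your proof is correct. Note that the paper itself gives no proof of this lemma --- it is quoted from Chatterjee--Seneta \cite{CS} --- so there is no internal argument to compare against; what you have written is essentially the classical proof from that reference, and it is self-contained and sound. Both halves check out: the forward direction is indeed immediate from continuity of $\tau$ together with $\tau(1_N v^\top)=0$, and your converse correctly isolates the one nontrivial mechanism, namely that for \emph{backward} products $\Pi(t+1,t_0)=P(t+1)\Pi(t,t_0)$ each row of $\Pi(t+1,t_0)$ is a convex combination of rows of $\Pi(t,t_0)$, so the column extrema $m_j(t)$, $M_j(t)$ are monotone and hence convergent, while the bound $M_j(t)-m_j(t)\leq 2\tau(\Pi(t,t_0))$ (the factor $2$ correctly accounting for the $\tfrac12$ in \eqref{eq-2-10}) forces a common limit. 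This monotonicity is exactly what makes ``weak ergodicity'' (rows approaching each other) equivalent to ``strong ergodicity'' (rows converging) for backward products --- the equivalence fails for forward products --- so you have put the emphasis precisely where it belongs. One cosmetic remark: your limit $v(t_0)$ is a probability vector because each row of $\Pi(t,t_0)$ is a probability vector and the row sums pass to the limit, as you note; spelling out that this uses finiteness of $N$ would make the last step airtight, but this is a triviality.
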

\section{Conditions for convergence to an optimum}
In this section, we present the main theorem of this paper, which states a general sufficient condition for which  the algorithm \eqref{frame2} finds a minimizer $x^*$ of the distributed optimization problem \eqref{opt}. 

For our convergence result, we make the following assumptions for $f_i$:
\begin{enumerate}
\item For each $i=1,\dots,N$, $f_i$ is convex, and we have $L_i:=\sup\limits_{z\in \mathbb R^d}\sup\limits_{g\in\partial f_i(z)}\|g\|_1<\infty.$
\item There is at least one solution to the minimization problem $x^*\in\operatorname{argmin}_{x\in\mathbb R^d}f(x)$.
\end{enumerate}
From now on, to the sequence $\{P(t)\}_{t\geq0}$, we associate the backward products
\[
P(t,t_0):=P(t-1)P(t-2)\dots P(t_0),\quad t\geq t_0\geq0
\] 
with the convention $P(t_0,t_0):=I_N$ $(t_0\geq0)$.
Below, we introduce the main assumptions which constitute a sufficient condition to guarantee the convergence of the algorithm \eqref{frame2} to a minimizer of $f$.
\begin{itemize}
\item ($\mathcal A1$): The sequence $\{P(t)\}_{t\geq0}$ of $N\times N$ row-stochastic matrices satisfies
\begin{equation}\label{eq-4-1}
p^+:=\inf_{t\geq0}\left[\min_{i,j:~p_{ij}(t)>0}p_{ij}(t)\right]>0
\end{equation}
and there exists $T \in \mathbb{N}$ such that
\[
P(t+T,t)>0\quad \forall~t\geq0.
\]

\item ($\mathcal A2$): Each $\delta_i(\cdot)$ is nonnegative,  and $\sum_{t=0}^\infty \left(\|\Delta(t)\|\sup_{\ell\geq t}\|\Delta(\ell)\|\right)<\infty.$
\item ($\mathcal A3$): The set of absolute probability vectors for $\{P(t)\}_{t\geq0}$ denoted by
\[
\pi(t)=(\pi_1(t),\dots,\pi_N(t))^\top\quad (t\geq0)
\]
satisfies
$\sum\limits_{t=0}^\infty \|\Delta(t)\pi(t+1)\|=\infty$ and $\sum\limits_{t=0}^\infty\sqrt{t}\left(\max\limits_{i,j}|\pi_i(t+1)\delta_i(t)-\pi_j(t+1)\delta_j(t)|\right)<\infty$.
\end{itemize}
We remark that the existence and uniqueness of the vector $\pi (t)$ in $(\mathcal A3)$ is guaranteed by ($\mathcal A1$) as proved in  Lemma 4.2 below.

Now we state the main theorem of this paper.
\begin{thm}\label{Tmain}
Let $X(t)$ be a solution to \eqref{frame2}. Suppose that $(\mathcal A 1)$-$(\mathcal A 3)$ hold. Then there exists some minimizer $x^*$ of $f$ such that $\lim\limits_{t\to\infty}x_i(t)=x^*$ for all $1\leq i\leq N$. 
\end{thm}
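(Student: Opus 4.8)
The plan is to decouple the dynamics into a \emph{consensus} part and a \emph{descent} part, following the classical template for distributed subgradient methods but carried out with respect to the weighted average induced by the absolute probability vectors. First I record the consequences of $(\mathcal A1)$: since every positive entry of $P(t)$ is at least $p^+$ and $P(t+T,t)$ is strictly positive, each entry of $P(t+T,t)$ is bounded below by $(p^+)^T$ (any nonzero path contributes $\ge (p^+)^T$), so by \eqref{eq-2-10} we get $\tau(P(t+T,t))\le 1-(p^+)^T=:\rho_0<1$; Lemma \ref{L2.4} (2) then yields geometric decay $\tau(P(t,s))\le C\rho^{\,t-s}$ for some $\rho\in(0,1)$. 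By Lemma \ref{L2.5} the sequence is ergodic, and Proposition \ref{P2.2} guarantees the existence and uniqueness of the absolute probability vectors $\{\pi(t)\}$ used in $(\mathcal A3)$. I then introduce the weighted average $\bar x(t):=X(t)^\top\pi(t)=\sum_{i=1}^N\pi_i(t)x_i(t)\in\mathbb R^d$. Using $\pi(t+1)^\top P(t)=\pi(t)^\top$ from \eqref{eq-2-1}, a direct computation gives the clean recursion
\[
\bar x(t+1)=\bar x(t)-\sum_{i=1}^N\alpha_i(t)\,g_i(t),\qquad \alpha_i(t):=\pi_i(t+1)\delta_i(t),
\]
so that $\bar x$ obeys an inexact subgradient iteration with agent-dependent effective step sizes $\alpha_i(t)$.

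For the consensus estimate I would unroll \eqref{frame2} as $X(t)=P(t,t_0)X(t_0)-\sum_{s=t_0}^{t-1}P(t,s+1)\Delta(s)G(s)$ and write $x_i(t)^\top-\bar x(t)^\top=(e_i-\pi(t))^\top X(t)$, where $e_i$ is the $i$-th standard basis vector. The key identity is $\pi(t)^\top P(t,s+1)=\pi(s+1)^\top$ (iterating \eqref{eq-2-1}), which turns each row factor into $(e_i-\pi(t))^\top P(t,s+1)=e_i^\top P(t,s+1)-\pi(s+1)^\top$; since $\pi(s+1)^\top$ is the convex combination $\pi(t)^\top P(t,s+1)$ of the rows of $P(t,s+1)$, the characterization in Lemma \ref{L2.4} (1) bounds its $\ell_1$-norm by $2\tau(P(t,s+1))\le 2C\rho^{\,t-s-1}$. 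Together with the subgradient bound $\|g_i(t)\|_1\le L_i$ this yields
\[
\|x_i(t)-\bar x(t)\|\le C\rho^{\,t}\|X(t_0)\|+2C\sum_{s=t_0}^{t-1}\rho^{\,t-s-1}\|\Delta(s)\|\,\|G(s)\|,
\]
which tends to $0$. Crucially, multiplying by $\|\Delta(t)\|$ and summing in $t$, a Fubini interchange reduces the convolution of geometric decay against $\|\Delta(s)\|$ to $\sum_s\|\Delta(s)\|\sup_{\ell\ge s}\|\Delta(\ell)\|$; hence $(\mathcal A2)$ is exactly what gives $\sum_t\|\Delta(t)\|\,\|x_i(t)-\bar x(t)\|<\infty$.

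The descent part analyzes $a(t):=\|\bar x(t)-x^*\|^2$ for a minimizer $x^*$. Expanding the recursion for $\bar x$ and using convexity of each $f_i$ (so that $\langle g_i(t),\bar x(t)-x^*\rangle\ge f_i(x_i(t))-f_i(x^*)-L_i\|x_i(t)-\bar x(t)\|$) gives a one-step inequality
\[
a(t+1)\le a(t)-2\bar\alpha(t)\big(f(\bar x(t))-f(x^*)\big)+E_1(t)+E_2(t)+E_3(t),
\]
with $\bar\alpha(t):=\tfrac1N\sum_i\alpha_i(t)$. Here $E_1(t)\lesssim\|\Delta(t)\|^2$ comes from the quadratic term, $E_2(t)\lesssim\|\Delta(t)\|\sum_k\|x_k(t)-\bar x(t)\|$ from the consensus gap, and $E_3(t)\lesssim\beta(t)\sum_i|f_i(x_i(t))-f_i(x^*)|$ from the discrepancy of the step sizes, where $\beta(t):=\max_{i,j}|\alpha_i(t)-\alpha_j(t)|$. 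The sums $\sum_tE_1(t)$ and $\sum_tE_2(t)$ are finite by $(\mathcal A2)$ and the consensus estimate; for $E_3$ I would first establish the crude a priori bound $\|\bar x(t)-x^*\|=O(\sqrt t)$ --- which follows because $\|\bar x(t+1)-\bar x(t)\|\lesssim\|\Delta(t)\|$ and Cauchy--Schwarz gives $\sum_{s<t}\|\Delta(s)\|\le\sqrt t\,(\sum_s\|\Delta(s)\|^2)^{1/2}$, the square sum being finite by $(\mathcal A2)$ --- so that $|f_i(x_i(t))-f_i(x^*)|\lesssim 1+\sqrt t$ and $E_3(t)\lesssim\sqrt t\,\beta(t)$ is summable precisely by the second condition in $(\mathcal A3)$.

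Finally, the one-step inequality, having the form $a(t+1)\le a(t)-b(t)+c(t)$ with $b(t)=2\bar\alpha(t)(f(\bar x(t))-f(x^*))\ge0$ and $\sum_t c(t)=\sum_t\sum_k E_k(t)<\infty$, is of Robbins--Siegmund type: $a(t)=\|\bar x(t)-x^*\|^2$ converges for every minimizer $x^*$ and $\sum_t b(t)<\infty$. Since the first condition in $(\mathcal A3)$ gives $\sum_t\bar\alpha(t)=\infty$, we obtain $\liminf_t f(\bar x(t))=f(x^*)$; extracting a convergent subsequence $\bar x(t_k)\to x^\infty$ (the iterates are bounded as $a(t)$ converges) and using continuity of $f$ shows $x^\infty$ is a minimizer, after which applying the convergence of $\|\bar x(t)-x^\infty\|$ upgrades this to $\bar x(t)\to x^\infty$. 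The consensus estimate then gives $x_i(t)=\bar x(t)+(x_i(t)-\bar x(t))\to x^\infty$ for every $i$, proving the theorem with $x^*=x^\infty$. I expect the genuine obstacle to be the treatment of $E_3$: the agent-dependent effective step sizes $\alpha_i(t)=\pi_i(t+1)\delta_i(t)$ obstruct a direct telescoping to $f(\bar x(t))-f(x^*)$, and reconciling this with the merely $O(\sqrt t)$ a priori control of the iterates is exactly what forces, and is resolved by, the weighted $\sqrt t$-condition in $(\mathcal A3)$ --- the feature that distinguishes the row-stochastic setting from the doubly-stochastic one.
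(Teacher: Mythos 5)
Your proposal is correct and follows essentially the same route as the paper: the weighted average $X(t)^\top\pi(t)$ built from the absolute probability vectors, geometric decay of the ergodicity coefficient under $(\mathcal A1)$ for the consensus estimate, the same four-term error decomposition (quadratic in $\|\Delta(t)\|$, consensus gap, step-size discrepancy controlled by $\sqrt{t}\,\beta(t)$ via the $O(\sqrt t)$ a priori bound), and a Robbins--Siegmund-type conclusion. The only difference is cosmetic: you prove the supermartingale convergence argument inline, whereas the paper cites it as Lemma \ref{lem-6-4} from \cite{NO2}.
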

In the following lemma, we prove that if there exists a  sequence $\{P(t)\}_{t\geq1}$ satisfying $(\mathcal A1)$ is given, then there exist sequences $\{\Delta(t)\}_{t\geq0}$ and $\{\pi(t)\}_{t\geq0}$ satisfying $(\mathcal A 2)$-$(\mathcal A 3)$.
\begin{lem}\label{L4.2}
Suppose that ($\mathcal A1$) holds. Then the following assertions hold.
\begin{enumerate}
\item For all $t\geq t_0\geq0$, the function $\tau$ defined in \eqref{eq-2-10} satisfies
\[
\tau(P(t,t_0))\leq C\lambda^{t-t_0}
\]
for some constants $C>0$ and $0<\lambda<1$, both  independent of $t$ and $t_0$.
\item The set $\{\pi(t)\}_{t\geq0}$ of absolute probability vectors defined in $(\mathcal A3)$ uniquely exists, and satisfies
\[
\pi_i(t)\geq(p^+)^T,\quad \forall~ 1\leq i\leq N,\quad\forall~t\geq 0.
\]
\item If we set
\begin{equation}\label{eq-4-2}
\delta_i(t):=\frac{c\bar\delta(t)}{\pi_i(t+1)+\epsilon_i(t)},\quad \forall~1\leq i\leq N,\quad~\forall~ t\geq0
\end{equation}
for some  $c>0$, $\bar\delta(t)\geq0$ satisfying $\sum_{t\geq0}\bar\delta(t)\sup_{\ell\geq t}\bar\delta(t)<\infty$ and $\sum_{t\geq0}\bar\delta(t)=\infty$, and $\epsilon_i(t)\in(-\pi_i(t+1),\infty)$ approaching zero geometrically fast as $t\to\infty$, then $\{\Delta(t)\}_{t\geq0}$ satisfies ($\mathcal A2$)-($\mathcal A 3$).
\end{enumerate}
\end{lem}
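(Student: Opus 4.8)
The plan is to dispatch the three assertions in order, the first two being structural consequences of the earlier machinery and the third carrying the analytic content. For assertion (1), I would first extract a uniform contraction over windows of length $T$. Since $P(t+T,t)=P(t+T-1)\cdots P(t)>0$ by $(\mathcal{A}1)$, each entry is a sum over directed index-paths of products of $T$ factor-entries; positivity forces at least one all-positive path for every index pair, and every positive factor-entry is $\ge p^+$, so every entry of $P(t+T,t)$ is at least $(p^+)^T$. Substituting this into $\tau(P)=1-\min_{i_1,i_2}\sum_j\min\{p_{i_1 j},p_{i_2 j}\}$ yields $\tau(P(t+T,t))\le 1-N(p^+)^T=:\beta<1$ uniformly in $t$ (with $\beta\ge0$ automatic since the matrix is row-stochastic). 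For general $t\ge t_0$ I would write $t-t_0=kT+r$ with $0\le r<T$, factor $P(t,t_0)$ through the composition identity $P(t,t_0)=P(t,s)P(s,t_0)$ into $k$ length-$T$ blocks and one shorter remainder, and apply submultiplicativity of $\tau$ (Lemma~\ref{L2.4}(2)) with $\tau\le1$ to get $\tau(P(t,t_0))\le\beta^{k}\le\beta^{-1}\bigl(\beta^{1/T}\bigr)^{t-t_0}$, so $C=\beta^{-1}$ and $\lambda=\beta^{1/T}\in(0,1)$ work (if $\beta=0$ one simply enlarges it).

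For assertion (2), part (1) gives $\tau(P(t,t_0))\to0$ as $t\to\infty$ for every $t_0$, hence $\{P(t)\}$ is ergodic by Lemma~\ref{L2.5}; the definition of ergodicity then supplies a probability-vector sequence $\{\pi(t)\}$ with $\lim_t P(t,t_0)=1_{N}\pi(t_0)^\top$, and Proposition~\ref{P2.2} identifies it as the unique set of absolute probability vectors. For the lower bound I would iterate $\pi(t+1)^\top P(t)=\pi(t)^\top$ over $T$ steps to obtain $\pi(t)^\top=\pi(t+T)^\top P(t+T,t)$; reading off the $i$-th component and using that every entry of $P(t+T,t)$ is $\ge(p^+)^T$ while $\pi(t+T)$ is a probability vector gives $\pi_i(t)\ge(p^+)^T\sum_k\pi_k(t+T)=(p^+)^T$.

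For assertion (3), set $\pi_{\min}:=(p^+)^T$. Since $\epsilon_i(t)>-\pi_i(t+1)$ the denominator is positive, so $\delta_i(t)\ge0$. The geometric decay $|\epsilon_i(t)|\le C_\epsilon\rho^{t}$ together with $\pi_{\min}\le\pi_i(t+1)\le1$ yields, for all large $t$, the two-sided bounds $\tfrac{2}{3}\pi_i(t+1)\le\pi_i(t+1)+\epsilon_i(t)\le\tfrac{3}{2}\pi_i(t+1)$, whence $\delta_i(t)\le(2c/\pi_{\min})\bar\delta(t)$ and $\pi_i(t+1)\delta_i(t)\in[\tfrac{2c}{3}\bar\delta(t),\,2c\,\bar\delta(t)]$. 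The first bound gives $\|\Delta(t)\|\le(2c/\pi_{\min})\bar\delta(t)$ for large $t$, so $\sum_t\|\Delta(t)\|\sup_{\ell\ge t}\|\Delta(\ell)\|\lesssim\sum_t\bar\delta(t)\sup_{\ell\ge t}\bar\delta(\ell)<\infty$, establishing $(\mathcal{A}2)$ (the finitely many small-$t$ terms are finite). The second bound gives $\|\Delta(t)\pi(t+1)\|\gtrsim\bar\delta(t)$, and since $\sum_t\bar\delta(t)=\infty$ the first requirement of $(\mathcal{A}3)$ follows.

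The load-bearing step is the second requirement of $(\mathcal{A}3)$, and it is exactly where the form of $\delta_i(t)$ is exploited: writing $\pi_i(t+1)\delta_i(t)=c\bar\delta(t)\,r_i(t)$ with $r_i(t)=(1+\epsilon_i(t)/\pi_i(t+1))^{-1}$, the common factor $c\bar\delta(t)$ cancels in the difference, leaving $\pi_i(t+1)\delta_i(t)-\pi_j(t+1)\delta_j(t)=c\bar\delta(t)\,(r_i(t)-r_j(t))$. Putting $u_i(t)=\epsilon_i(t)/\pi_i(t+1)$, one has $|u_i(t)|\le(C_\epsilon/\pi_{\min})\rho^{t}$ and, once $|u_i(t)|\le\tfrac12$, the elementary estimate $|r_i(t)-r_j(t)|\le4|u_i(t)-u_j(t)|$, so $\max_{i,j}|\pi_i(t+1)\delta_i(t)-\pi_j(t+1)\delta_j(t)|\le K\,\bar\delta(t)\,\rho^{t}$. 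Since $\bar\delta(t)$ is bounded (indeed $\bar\delta(t)\to0$, because $\bar\delta(t)^2\le\bar\delta(t)\sup_{\ell\ge t}\bar\delta(\ell)$ is summable), this gives $\sum_t\sqrt{t}\,\max_{i,j}|\pi_i(t+1)\delta_i(t)-\pi_j(t+1)\delta_j(t)|\lesssim\sum_t\sqrt{t}\,\rho^{t}<\infty$. I expect the main obstacle to be precisely this cancellation-plus-geometric-decay argument: one must recognize that the inter-agent discrepancy is governed only by the perturbations $\epsilon_i$ (the dominant part $c\bar\delta(t)$ dropping out), so that the polynomial weight $\sqrt{t}$ is absorbed by $\rho^{t}$ rather than by $\bar\delta(t)$, whose summability is not assumed.
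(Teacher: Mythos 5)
Your proof is correct and takes essentially the same route as the paper's: the entrywise lower bound $(p^+)^T$ on $P(t+T,t)$ combined with submultiplicativity of $\tau$ for (1), ergodicity via Lemma \ref{L2.5} and Proposition \ref{P2.2} plus iterating the absolute-probability relation to get $\pi(t)^\top=\pi(t+T)^\top P(t+T,t)$ for (2), and for (3) the two-sided comparability $\delta_i(t)\asymp\bar\delta(t)$ (yielding $(\mathcal A2)$ and the divergence part of $(\mathcal A3)$) together with the same cancellation by which $\pi_i(t+1)\delta_i(t)-\pi_j(t+1)\delta_j(t)$ reduces to a quantity controlled by $|\epsilon_i(t)|+|\epsilon_j(t)|$, so the geometric decay absorbs the weight $\sqrt{t}$. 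Your normalized ratios $r_i(t)$ versus the paper's common-denominator identity, your explicit path argument for the entrywise bound, and your handling of the $\beta=0$ edge case are only cosmetic differences.
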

\begin{proof}
(1) For any row-stochastic $Q=(q_{ij})$, we deduce from \eqref{eq-2-1} the following inequality
\begin{equation}\label{eq-4-20}
\tau(Q)=1-\min_{i_1,i_2} \sum_{j=1}^N\min\{q_{i_1j},q_{i_2j}\}\leq 1-N\min_{i,j}q_{ij}.
\end{equation}
For any $s\geq0$, every entry of $P(s+T,s)$ is greater than or equal to $(p^+)^T$ defined in \eqref{eq-4-1}. Combining this with \eqref{eq-4-20} we deduce
\[
\tau(P(s+T,s))\leq 1-N(p^+)^T=:\gamma<1.
\]
Suppose $t-t_0=qT+m$ with $q\in\mathbb N\cup\{0\}$ and $0\leq m<T$. By Proposition 2.4, we have
\begin{align*}
\tau(P(t,t_0))&\leq \tau(P(t,t_0+qT))\prod_{k=1}^q\tau(P(t_0+kT,t_0+(k-1)T))\\
&\leq \gamma^{q}=\gamma^{\frac{t-t_0-m}{T} }=\gamma^{-\frac{m}{T}}\cdot \left(\gamma^{\frac{1}{T}}\right)^{t-t_0}<\gamma^{-1}\cdot \left(\gamma^{\frac{1}{T}}\right)^{t-t_0}=C\lambda^{t-t_0}.
\end{align*}
(2) By (1), Lemma \ref{L2.5} and Proposition \ref{P2.2}, the existence and uniqueness of $\{\pi(t)\}_{t\geq0}$ are guaranteed. By $(\mathcal A1)$, we have
\begin{align*}
\pi_i(t)&=\pi(t)^\top \bold e_i =\pi(t+T)^\top P(t+T,t)\bold e_i\geq\sum_{j=1}^N\pi_j(t+r_0)(p^+)^T=(p^+)^T.
\end{align*}
(3) By (2) and \eqref{eq-4-2}, there exist positive constants $c_1,c_2$ indepedent of $t$ such that
\[
c_1\bar\delta(t) \leq\delta_i(t)\leq c_2\bar\delta(t). 
\]
Combining this with the given assumptions, we have
\[
\sum_{t=0}^\infty \left(\|\Delta(t)\|\sup_{\ell\geq t}\|\Delta(\ell)\|\right)<\infty\quad\mbox{and}\quad \sum\limits_{t=0}^\infty \|\Delta(t)\pi(t+1)\|=\infty.
\]
Also note that $\max_{i,j}|\pi_i(t+1)\delta_i(t)-\pi_j(t+1)\delta_j(t)|$ approaches zero geometrically fast, since
\begin{align*}
|\pi_i(t+1)\delta_i(t)-\pi_j(t+1)\delta_j(t)|&=\frac{c\bar\delta(t)}{(\pi_i(t+1)+\epsilon_i(t))(\pi_j(t+1)+\epsilon_j(t))}\cdot |\pi_i(t+1)\epsilon_j(t)-\pi_j(t+1)\epsilon_i(t)|\\
&\leq \tilde C\cdot \big(| \epsilon_j(t)|+|\epsilon_i(t)|\big),
\end{align*}
for some constant $\tilde C>0$. Hence
\begin{equation}
\begin{split}
 \sum\limits_{t=0}^\infty\sqrt{t}\left(\max\limits_{i,j}|\pi_i(t+1)\delta_i(t)-\pi_j(t+1)\delta_j(t)|\right)
<\infty.
\end{split}
\end{equation}
The proof is finished.
\end{proof}
\begin{rem}\mbox{~}
\begin{itemize}
\item[(1)] We may set $\delta_i(t)=\frac{ct^\alpha}{\pi_i(t+1)}$ with $\alpha\in[-1,-1/2)$ and $c>0$ in Lemma \ref{L4.2} (3).
\item[(2)] When all $P(t)$ are doubly-stochastic, then $\pi(0)=\pi(1)=\dots=\frac{1}{N}1_{N}$. So we may simply set $\delta_i(t)=ct^\alpha$ with $\alpha\in[-1,-1/2)$ and $c>0$ in Lemma \ref{L4.2} (3).
\end{itemize}
\end{rem}

In the following proposition, we provide a sufficient condition $(\mathcal A1)'$ for $\{P(t)\}_{t\geq0}$ to satisfy $(\mathcal A1)$, described in terms of directed graphs. The condition $(\mathcal A1)'$ is adapted from \cite{NO2}. We mention that however, $(\mathcal A1)$ is not equivalent to $(\mathcal A1)'$. (See the remark that follows Proposition \ref{P4.4} below.)
We begin with the following definition. 
\begin{defn} For graph $G= (V, E)$ with $V = \{1, \cdots, N\}$ and $E \subset V \times V$, we call $G$ strongly connected if for any $(i,j) \in V\times V$, there exists a path from $i$ to $j$, i.e., there exists a finite sequence of vertices  $i=i_0,i_1,\dots,i_n=j$ in $V$ such that $(i_{l-1},i_l)\in E$ for $l=1,\dots,n$.
\end{defn}
\begin{prop}\label{P4.4} Given the sequence of $N\times N$ matrices $\{P(t)\}_{t \geq 0}$ with nonnegative entries, define the directed graphs $G(t)=(V,E(t))$ $(t\geq0)$ with $V=\{1,\dots,N\}$ and $E(t)\subseteq V\times V$ in the following way:
\[
(i,j)\in E(t)\quad\mbox{if and only if}\quad p_{ij}(t)>0.
\]
Then the following  assumption implies $(\mathcal A1)$:
\begin{enumerate}

\item[$(\mathcal A1)'$]  The sequence $\{P(t)\}_{t\geq0}$ of $N\times N$ row-stochastic matrices with positive diagonal entries satisfy
\begin{equation}\label{eq-4-11}
p^+:=\inf_{t\geq0}\left[\min_{i,j:~p_{ij}(t)>0}p_{ij}(t)\right]>0
\end{equation}
and there exists $t_0\in\mathbb N$ such that for all $t\geq0$, the union graph 
\[
\bigcup_{t\leq s< t+t_0}G(s):=\left(V,\bigcup_{t\leq s< t+t_0}E(s)\right)
\]
is strongly connected. 
\end{enumerate}
\end{prop}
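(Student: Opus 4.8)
The condition $p^+ > 0$ appears verbatim in both $(\mathcal{A}1)'$ and $(\mathcal{A}1)$, so the entire task is to produce a single $T \in \mathbb{N}$ with $P(t+T,t) > 0$ for every $t \geq 0$. The plan is to translate positivity of the entries of the backward product into the existence of suitable walks in the time-varying graphs. Since all $P(s)$ are nonnegative, expanding the matrix product shows that $(P(t+T,t))_{ij} > 0$ if and only if there is a sequence of vertices $i = v_0, v_1, \dots, v_T = j$ with $(v_m, v_{m+1}) \in E(t+T-1-m)$ for each $0 \le m \le T-1$; that is, a directed walk from $i$ to $j$ whose $m$-th edge is present at the decreasing times $t+T-1, t+T-2, \dots, t$.

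Two features of $(\mathcal{A}1)'$ drive the argument. First, the positive diagonal entries mean every $G(s)$ has a self-loop at each vertex, so at any single time step a walk may ``wait'' at its current vertex. Second, every window $[a, a+t_0)$ has strongly connected union graph. I would first record the consequence that, within a single length-$t_0$ block of times, one can realize any one edge of that block's union graph: if $(u,w) \in E(s^\ast)$ for some $s^\ast$ in the block, then a walk can stay at $u$ via self-loops until time $s^\ast$, cross the edge to $w$, and then stay at $w$ via self-loops for the remainder of the block. Thus each window advances the walk by at least one genuine edge of its union graph, no matter where in the window that edge actually occurs.

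With these in hand I would run a reachable-set growth argument. Partition $[t, t+T)$ into consecutive windows of length $t_0$; the walk traverses them in decreasing-time order, but since every window of length $t_0$ is strongly connected this ordering is immaterial. Let $R_k$ denote the set of vertices reachable from $i$ by valid partial walks after $k$ windows, so $R_0 = \{i\}$. The self-loops give $R_{k-1} \subseteq R_k$. If $R_{k-1} \ne V$, then because the $k$-th window's union graph is strongly connected there is an edge from some vertex of $R_{k-1}$ to some vertex outside $R_{k-1}$; using that edge together with self-loops yields $|R_k| \ge |R_{k-1}| + 1$. Starting from $|R_0| = 1$, the set $R_k$ equals $V$ after at most $N-1$ windows, so taking $T = (N-1)t_0$ gives $R_{N-1} = V$ for every source $i$ and every starting time $t$, which is exactly $P(t+T,t) > 0$ for all $t \ge 0$, establishing $(\mathcal{A}1)$.

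The routine computations are the matrix-product expansion and the waiting construction; the point needing the most care is the bookkeeping of directions and times, namely matching the index pattern $(v_m, v_{m+1}) \in E(t+T-1-m)$ to edges of $G(s)$ and observing that, although the walk consumes the windows in reverse chronological order, strong connectivity of every window makes this harmless. The one graph-theoretic fact invoked, that a strongly connected digraph has an edge leaving every proper nonempty vertex subset, is elementary: take any vertex inside and any outside, and follow a connecting path until it first exits the subset. This is also the step where the self-loop hypothesis in $(\mathcal{A}1)'$ is essential, consistent with the paper's later remark that $(\mathcal{A}1)$ itself does not require self-loops.
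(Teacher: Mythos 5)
Your proof is correct and follows essentially the same route as the paper: both take $T=(N-1)t_0$, show that each length-$t_0$ window contributes all edges of its union graph plus self-loops (your waiting construction is the walk-level form of the paper's matrix inequality $P(t+t_0,t)\geq (p^+)^{t_0}\left(I_N+\sum_{k=0}^{t_0-1}A(t+k)\right)$), and then grow the reachable set by at least one vertex per window, which is exactly the paper's lemma that each row of a product of $k$ irreducible matrices with positive diagonal entries has at least $k+1$ positive entries, proved there by the same first-exit-edge argument you invoke. The difference is purely one of language — walks and reachable sets versus matrix inequalities and irreducibility — not of substance.
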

\begin{proof}

 Define $A(t)=(a_{ij}(t))$ $(t\geq0)$ by
\[
a_{ij}(t)=\begin{cases}
1\quad\mbox{if}\quad (i,j)\in E(t) \quad\mbox{and}\quad i\neq j,\\
0\quad\mbox{otherwise.}
\end{cases}
\]
Then we have $P(t)\geq p^+ (I_N+A(t))$, for any $t\geq0$ by definition of $p^+$ given in \eqref{eq-4-11}, and so
\begin{equation}\label{eq-exm32-1}
\begin{split}
P(t+t_0,t)&\geq (p^+)^{t_0}(I_N+A(t+t_0-1))\dots(I_N+A(t))
\\
&\geq(p^+)^{t_0}\left(I_N+\sum_{k=0}^{t_0-1}A(t+k)\right)=: B(t,t_0).
\end{split}
\end{equation}
The $(i,j)$-entry of $B(t,t_0)$ is positive if and only if
\[
(i,j)\in\bigcup_{0\leq k\leq t_0-1}E(t+k)\quad \textrm{or}\quad i=j.
\]
By the strong connectivity of $(\mathcal A1)'$, the matrix $B(t,t_0)$ is an irreducible matrix, i.e., for any pair of indices $(i,j)$ there exists a finite sequence $i=i_0,i_1,\dots,i_n=j$ such that $(i_{m-1},i_{m})$-entry of the matrix $B(t,t_0)$ is positive for all $1\leq m\leq n$, and has positive diagonal entries. So, the same is true for $P(t+t_0,t)$ by \eqref{eq-4-1}. Noting that $P(t+(N-1)t_0,t)$ is equal to 
\begin{equation}
P(t+(N-1)t_0, t) = \prod_{k=0}^{N-1} P(t+(k+1)t_0, t+k t_0)
\end{equation}
which is a product of $N-1$ irreducible matrices with positive diagonal entries, we can see that proving the following claim would ensure the positivity of  $P(t+(N-1)t_0,t)$, thereby finishing the proof.
\end{proof}

\begin{lem}Each row of a product of $k$ irreducible matrices with positive diagonal entries has at least $k+1$ positive entries. $(1\leq k\leq N-1)$
\end{lem}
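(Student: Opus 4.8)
The plan is to reduce the statement to the combinatorics of the zero/nonzero pattern. For a nonnegative matrix $M$, write $S_i(M):=\{\,j:\ M_{ij}>0\,\}$ for the support of its $i$-th row, identified with the out-neighbourhood of vertex $i$ in the associated digraph. Two elementary observations drive the proof. First, for nonnegative $A,B$ one has $S_i(AB)=\bigcup_{k\in S_i(A)}S_k(B)$, since $(AB)_{ij}>0$ precisely when some $k$ satisfies $A_{ik}>0$ and $B_{kj}>0$. Second, if $B$ has positive diagonal then $k\in S_k(B)$ for every $k$, so $S_i(AB)\supseteq S_i(A)$; that is, multiplying on the right by a positive-diagonal matrix can only enlarge each row support. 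It therefore suffices to show that each such multiplication \emph{strictly} enlarges any row support that is not already all of $\{1,\dots,N\}$, and then to count.

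The key step is the following strict-growth claim: if $B$ is irreducible with positive diagonal and $S\subsetneq\{1,\dots,N\}$ is nonempty, then $\bigcup_{k\in S}S_k(B)\supsetneq S$. I would prove this from the strong connectivity of the digraph of $B$: choosing $u\in S$ and $v\notin S$, a directed path from $u$ to $v$ must cross the boundary of $S$, producing an edge $(k,j)$ with $k\in S$ and $j\notin S$, i.e. $B_{kj}>0$. Then $j\in S_k(B)\subseteq\bigcup_{k\in S}S_k(B)$, so the union contains a point outside $S$ while, by the positive diagonal, it also contains all of $S$; hence it strictly contains $S$.

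With these in hand I would finish by induction on $k$. For the base case $k=1$, an irreducible matrix of size $N\ge2$ has positive diagonal and, by the strict-growth claim applied to $S=\{i\}$, at least one off-diagonal positive entry in row $i$, giving $|S_i|\ge2$. For the inductive step, suppose $P_k:=M_1\cdots M_k$ satisfies $|S_i(P_k)|\ge k+1$ for every $i$, with $k\le N-2$. Fix $i$ and set $S:=S_i(P_k)$. If $|S|=N$ then $|S_i(P_{k+1})|\ge|S|=N\ge k+2$ already; otherwise $S$ is a proper nonempty subset, and the strict-growth claim with $A=P_k$ and $B=M_{k+1}$ gives $|S_i(P_{k+1})|=\bigl|\bigcup_{\ell\in S}S_\ell(M_{k+1})\bigr|\ge|S|+1\ge k+2$. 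This carries the induction up to $k=N-1$.

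The only genuinely non-routine point is the strict-growth claim of the second paragraph; everything else is bookkeeping about supports. The obstacle there is to convert the global hypothesis ``strongly connected'' into the local statement ``there is an edge leaving $S$'', which is exactly the boundary-crossing argument along a $u\to v$ path. Once that is isolated, the positive-diagonal hypothesis guarantees monotonicity of supports, and the induction simply adjoins one new index at each multiplication until the row is full.
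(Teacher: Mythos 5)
Your proposal is correct and takes essentially the same route as the paper's proof: induction on $k$, monotonicity of row supports under right-multiplication by a positive-diagonal factor, and a boundary-crossing path argument that uses irreducibility of the last factor to adjoin one new positive entry to a not-yet-full row support. The only difference is presentational: you isolate the key step as a set-theoretic ``strict-growth'' lemma on supports, while the paper runs the same first-crossing argument directly on the matrix entries $a_{ij}^{(1,k)}$.
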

\begin{proof}
Let $A^{(\ell)}=(a_{ij}^{(\ell)})$ $(\ell=1,\dots,k)$ be such matrices, and consider the product $A^{(1)}\dots A^{(k)}$. We proceed by induction on $k$. \\
(i) $k=1$: For each $i$, we have $a_{ii}^{(1)}>0$, and the existence of $j\neq i$ satisfying $a_{ij}^{(1)}>0$ can be shown in the following way: Choose any $q\neq i$. Since $A^{(1)}$ is irreducible, there exists a finite sequence $i=i_0, i_1,\dots,i_n=q$ of indices such that $a_{i_{m-1}i_{m}}^{(1)}>0$ for all $1\leq m\leq n$. Set $j=i_{s}$, where $s$ is the minimal index satisfying $i_s\neq i$.\\
(ii) Suppose that the claim holds for $k-1\leq N-2,$ Set $A^{(1)}\dots A^{(\ell)}=(a_{ij}^{(1,\ell)})$. Note that if $a_{ij}^{(1,k-1)}$ is positive then so is $a_{ij}^{(1,k)}$, as can be seen in the following relation:
\begin{equation}\label{eq-exm32-2}
a_{ij}^{(1,k)}=\sum_{\ell}a_{i\ell}^{(1,k-1)}a_{\ell j}^{(k)}\geq  a_{ij}^{(1,k-1)}a_{j j}^{(k)}.
\end{equation}
Fix $i$. If the $i$-th row of $A^{(1)}\dots A^{(k-1)}$ had at least $k+1$ positive entries, then by \eqref{eq-exm32-2}, the same is true for  $A^{(1)}\dots A^{(k)}$. Now suppose that the $i$-th row of $A^{(1)}\dots A^{(k-1)}$ has exactly $k$ positive entries $(k \leq N-1)$.
To complete the induction step, it suffices to show that there exists $j$ with $a_{ij}^{(1,k)}>0=a_{ij}^{(1,k-1)}$. Pick any $\ell$ with $a_{i\ell}^{(1,k-1)}=0$. Since $A^{(k)}$ is irreducible, there exists a finite sequence $i=i_0, i_1,\dots,i_n=q$ of indices such that $a_{i_{m-1}i_{m}}^{(k)}>0$ for all $1\leq m\leq n$. Set $j=i_{s}$, where $s$ is the minimal index satisfying $a_{ii_s}^{(1,k-1)}=0$, whose existence is guaranteed by the fact that $a_{i\ell}^{(1,k-1)}=0$. Then
\[
a_{ii_s}^{(1,k)}=\sum_{p}a_{ip}^{(1,k-1)}a_{p i_s}^{(k)}\geq  a_{ii_{s-1}}^{(1,k-1)}a_{i_{s-1}i_s}^{(k)}>0.
\]
Hence the claim holds for $k+1$.
\end{proof}
We close this section with showing that the condition $(\mathcal A1)$ is strictly more general than the condition $(\mathcal A1)'$. For this, we consider time independent $P(t)\equiv P$, where $P$ is \textit{irreducible} and  \textit{aperiodic}. In other words, the corresponding directed graph $G(t)\equiv G=(V,E)$ (as described in Proposition \ref{P4.4}) is
\begin{enumerate}
\item [(i)] strongly connected: for any pair of vertices $(i,j)$, $G$ contains a path from $i$ to $j$, i.e., a finite sequence $i=i_0,i_1,\dots,i_n=j$ such that $(i_{m-1},i_{m})\in E$  for all $1\leq m\leq n$, and
\item [(ii)] of period 1: for each vertex $i\in V$, the greatest common divisor of the lengths of all paths from $i$ to $i$ is equal to 1,
\end{enumerate}
which is necessary and sufficient for the matrix $P$ to be  \textit{primitive}, i.e., $P^T>0$ for some $T\in\mathbb N$ (see \cite{Seneta}, for example). It implies that our example $P(t)\equiv P$ satisfies the condition $(\mathcal A1)$. In addition, if $P$ has at least one zero diagonal entry, then our example does not satisfy the positive diagonal condition of $(\mathcal A1')$. Here is an example of such $P$:

\[
P=\begin{pmatrix}
0 & 1 & 0 &0 \\
0& 0 & 1 & 0\\
\frac{1}{2}  & 0  & 0 & \frac{1}{2} \\
0 & 0 & 1 & 0 
\end{pmatrix}.
\]
\section{Application of the main theorem to several algorithms}
In this section, we apply the result of Theorem \ref{Tmain} to derive the convergence results for the four examples discussed in Section 2.
\subsection{Convergence theorem for Example \ref{E3.1}}

\begin{cor}\label{cor-5-1}
Let $\tilde X(t)$ be a solution to the following algorithm, originated from \cite{JKJJ}: 
\begin{equation*}
  \tilde x_i(t+1)=\sum_{j=1}^N\tilde p_{ij}(t) (\tilde x_j(t)- \tilde\delta_j(t)\tilde g_i(t)),\quad \tilde g_i(t)\in\partial f_i(\tilde x_i(t)),\quad 1\leq i\leq N,\quad t\geq0. 
\end{equation*}
Suppose that the following conditions $(\tilde{\mathcal A} 1),(\tilde{\mathcal A }2),(\tilde{\mathcal A }3)$ hold. 
\begin{itemize}
\item ($\tilde{\mathcal A1}$): The sequence $\{\tilde P(t)\}_{t\geq0}$ of $N\times N$ row-stochastic matrices and its backward products satisfy
\[
\tilde p^+:=\inf_{t\geq0}\left[\min_{i,j:~\tilde p_{ij}(t)>0}\tilde p_{ij}(t)\right]>0
\]
and
\[
\tilde P(t+T,t)>0\quad (\forall~t\geq0)
\]
for some $T\in\mathbb N$.

\item ($\tilde{\mathcal A}2$): Each $\tilde \delta_i(\cdot)$ is nonnegative,  and $\sum_{t=0}^\infty \left(\|\tilde \Delta(t)\|\sup_{\ell\geq t}\|\tilde \Delta(\ell)\|\right)<\infty.$
\item ($\tilde{\mathcal A}3$): The set of absolute probability vectors for $\{\tilde P(t)\}_{t\geq0}$, denoted by
\[
\tilde \pi(t)=(\tilde \pi_1(t),\dots,\tilde \pi_N(t))^\top\quad (t\geq0),
\]
satisfy
$\sum_{t=0}^\infty \|\tilde \Delta(t)\tilde \pi(t)\|=\infty$ and $\sum_{t=0}^\infty\sqrt{t}\left(\max_{i,j}|\tilde \pi_i(t)\tilde \delta_i(t)-\tilde \pi_j(t)\tilde \delta_j(t)|\right)<\infty$.
\end{itemize}
Then there exists some minimizer $x^*$ of $f$ such that $\lim\limits_{t\to\infty}\tilde x_i(t)=x^*$ for all $1\leq i\leq N$. 
\end{cor}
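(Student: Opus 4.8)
The plan is to recognize the scheme of Corollary \ref{cor-5-1} as the even-indexed subsequence of a single instance of \eqref{frame2} and then to apply Theorem \ref{Tmain}. Following the construction in Example \ref{E3.1}, I would introduce the ``doubled'' data
\[
P(2s):=I_N,\quad P(2s+1):=\tilde P(s),\quad \Delta(2s):=\tilde\Delta(s),\quad \Delta(2s+1):=O_N,\quad G(2s):=\tilde G(s),
\]
and set $X(2s):=\tilde X(s)$, taking the subgradients at odd steps to be arbitrary elements of the corresponding subdifferentials (they are annihilated by $\Delta(2s+1)=O_N$). Then $\{X(t)\}_{t\geq0}$ solves \eqref{frame2} and satisfies $x_i(2s)=\tilde x_i(s)$. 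Consequently, once $\{P(t)\},\{\Delta(t)\}$ are shown to verify $(\mathcal A1)$--$(\mathcal A3)$, Theorem \ref{Tmain} gives $x_i(t)\to x^*$, and restricting to even indices yields $\tilde x_i(s)\to x^*$, which is the assertion. So the entire task reduces to transferring the three hypotheses from the $\tilde P$-data to the doubled data.

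For $(\mathcal A1)$, the positive entries of $P(t)$ are either $1$ (from $I_N$) or positive entries of some $\tilde P(s)$, so $p^+=\tilde p^+>0$. For the window condition I would take $T':=2T+1$ and observe that the $I_N$ factors simply drop out of the backward product $P(t'+T',t')$, leaving a backward $\tilde P$-product that contains the block $\tilde P(m+T,m)$ (with $t'=2m$ or $t'=2m+1$); since $\tilde P(m+T,m)>0$ by $(\tilde{\mathcal A}1)$ and left-multiplication of a positive matrix by a row-stochastic matrix preserves positivity, we get $P(t'+T',t')>0$ for every $t'$. For $(\mathcal A2)$, the odd terms vanish, so $\sum_t\|\Delta(t)\|\sup_{\ell\geq t}\|\Delta(\ell)\|$ collapses to $\sum_s\|\tilde\Delta(s)\|\sup_{m\geq s}\|\tilde\Delta(m)\|$, finite by $(\tilde{\mathcal A}2)$, and nonnegativity is inherited.

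The delicate point, which I expect to be the only genuinely nontrivial step, is identifying the absolute probability vectors of $\{P(t)\}$. Writing the defining relation $\pi(t+1)^\top P(t)=\pi(t)^\top$ at even and odd indices gives $\pi(2s+1)=\pi(2s)$ together with $\pi(2s+2)^\top\tilde P(s)=\pi(2s)^\top$; comparing with $\tilde\pi(s+1)^\top\tilde P(s)=\tilde\pi(s)^\top$ shows that $\pi(2s)=\pi(2s+1)=\tilde\pi(s)$ solves the system, and by the uniqueness from Lemma \ref{L4.2}(2) (whose hypotheses hold since $(\mathcal A1)$ is now established) these are the absolute probability vectors. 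This bookkeeping is precisely what reconciles the $\pi(t+1)$ appearing in $(\mathcal A3)$ with the $\tilde\pi(t)$ appearing in $(\tilde{\mathcal A}3)$: at $t=2s$ one has $\pi(t+1)=\pi(2s+1)=\tilde\pi(s)$. It follows that $\|\Delta(t)\pi(t+1)\|$ and $\sqrt t\,\max_{i,j}|\pi_i(t+1)\delta_i(t)-\pi_j(t+1)\delta_j(t)|$ vanish at odd $t$ and reduce at $t=2s$ to $\|\tilde\Delta(s)\tilde\pi(s)\|$ and $\sqrt{2s}\,\max_{i,j}|\tilde\pi_i(s)\tilde\delta_i(s)-\tilde\pi_j(s)\tilde\delta_j(s)|$, so that $(\mathcal A3)$ follows from $(\tilde{\mathcal A}3)$ up to the harmless constant $\sqrt 2$. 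With all three assumptions verified, Theorem \ref{Tmain} closes the argument.
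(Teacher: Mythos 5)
Your proposal is correct and follows essentially the same route as the paper's proof: the even/odd doubling of Example \ref{E3.1} with $P(2s)=I_N$, $P(2s+1)=\tilde P(s)$, $\Delta(2s)=\tilde\Delta(s)$, $\Delta(2s+1)=O_N$, the identification $\pi(2s)=\pi(2s+1)=\tilde\pi(s)$ of the absolute probability vectors, and an application of Theorem \ref{Tmain} restricted to even indices. The only difference is one of detail: where the paper asserts that $(\mathcal A1)$--$(\mathcal A2)$ are ``direct to check,'' you actually carry out the verification (window $T'=2T+1$, positivity preserved under left-multiplication by row-stochastic factors), which is a welcome elaboration rather than a deviation.
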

\begin{proof}
For $t \geq 0$ we set $x_i (2t) = \tilde{x}_i (t)$ and $x_i (2t+1) = \tilde{x}_i (t) - \tilde \delta_i (t) \tilde{g}_i (t)$. We also define
\begin{equation}\label{eq-5-2}
P(2t)\equiv I_N,\quad P(2t+1)=\tilde P(t),\quad \Delta(2t+1)\equiv O_N,\quad \Delta(2t)=\tilde\Delta(t)\quad (t\geq0)
\end{equation}
and
\begin{equation}\label{eq-5-3}
\delta_i (2t) = \delta_i (t) \quad \textrm{and}\quad \delta_i (2t+1) =0.
\end{equation}
Then $X(t) =[x_1 (t), \cdots, x_N (t)]^\top$ satisfies
\begin{equation}
X(t+1) = P(t) X(t) - \Delta (t) G (t).
\end{equation}
Using \eqref{eq-5-2} it is direct to check that $(\mathcal A 1)$ and $({\mathcal A} 2)$ are equivalent to $(\tilde{\mathcal A}1)$ and $(\tilde{\mathcal A}2)$ respectively.  Since $P(2t) = I_N$ and $P(2t+1) =\tilde{P}(t)$ we find that $\pi (2t+1) = \pi (2t) =\tilde{\pi}(t)$. Using this and \eqref{eq-5-3} we see that $(\mathcal A3)$ and $(\tilde{\mathcal A}3)$ are equivalent. 
Combining this with Theorem \ref{Tmain}, we get the desired result.
\end{proof}
\subsection{Convergence theorem for Example \ref{E3.2}}
\begin{cor}
Let $X(t)$ be a solution to the following algorithm, originated from \cite{MA}: 
\begin{equation*}
\begin{cases}
\displaystyle     x_i(t+1)=\sum_{j=1}^N  p_{ij}  x_j(t)- \frac{\delta(t)}{z_{ii}(t)}  g_i(t),\quad  g_i(t)\in\partial f_i(  x_i(t)),\\
\displaystyle  z_i(t+1)=\sum_{j=1}^N  p_{ij}  z_j(t),\quad z_i(0)=\bold e_i,\quad 1\leq i\leq N,\quad t\geq0. 
\end{cases}
\end{equation*}
Suppose that $P=(p_{ij})$ is row-stochastic and primitive. Choose $\delta(t)=ct^\alpha$ with $\alpha\in[-1,-1/2)$ and $c>0$. Then there exists some minimizer $x^*$ of $f$ such that $\lim\limits_{t\to\infty}x_i(t)=x^*$ for all $1\leq i\leq N$. 
\end{cor}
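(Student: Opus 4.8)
The plan is to realize this iteration as an instance of the general scheme \eqref{frame2} and then invoke Theorem \ref{Tmain}. Following Example \ref{E3.2}, the algorithm corresponds to the time-independent choice $P(t)\equiv P$ together with the step-sizes
\[
\delta_i(t)=\frac{\delta(t)}{z_{ii}(t)}=\frac{c\,t^\alpha}{(P^t)_{ii}},\qquad 1\le i\le N,\ t\ge 0,
\]
since $Z(t)=P^t$ and hence $z_{ii}(t)=(P^t)_{ii}$. Because $\{P(t)\}$ is constant, its unique absolute probability vector is likewise constant, $\pi(t)\equiv\pi$, where $\pi$ is the stationary distribution determined by $\pi^\top P=\pi^\top$; by Lemma \ref{L4.2}(2) we have $\pi_i\ge (p^+)^T>0$ for all $i$. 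It therefore remains to verify $(\mathcal A1)$--$(\mathcal A3)$ and apply Theorem \ref{Tmain}.

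First I would dispatch $(\mathcal A1)$. Since $P$ is fixed with finitely many nonzero entries, $p^+=\min_{i,j:\,p_{ij}>0}p_{ij}>0$ is immediate, and primitivity furnishes $T\in\mathbb N$ with $P^T>0$, so that $P(t+T,t)=P^T>0$ for every $t$; this is exactly the observation made at the end of Section 4. As a byproduct, for $t\ge T$ one writes $P^t=P^{t-T}P^T$ and row-stochasticity of $P^{t-T}$ gives $(P^t)_{ii}\ge (p^+)^T$, so the step-sizes are well defined and satisfy $\delta_i(t)\le c(p^+)^{-T}t^\alpha$; in particular $\delta_i(t)\asymp t^\alpha$ for large $t$.

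The heart of the matter is to put $\delta_i(t)$ into the form required by Lemma \ref{L4.2}(3) and to control the error. Writing $\epsilon_i(t):=(P^t)_{ii}-\pi_i$, we have $\delta_i(t)=c\bar\delta(t)/(\pi_i+\epsilon_i(t))$ with $\bar\delta(t)=t^\alpha$, and I claim $\epsilon_i(t)\to 0$ geometrically. Indeed, from $\pi^\top P^t=\pi^\top$ one gets $(P^t)_{ii}-\pi_i=\sum_{k}\pi_k\big((P^t)_{ii}-(P^t)_{ki}\big)$, and by the definition \eqref{eq-2-10} of the ergodicity coefficient each difference is bounded by $2\tau(P^t)$; hence $|\epsilon_i(t)|\le 2\tau(P^t)\le 2C\lambda^t$ by Lemma \ref{L4.2}(1). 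Since $\alpha\in[-1,-1/2)$, the scalar sequence $\bar\delta(t)=t^\alpha$ obeys $\sum_t\bar\delta(t)\sup_{\ell\ge t}\bar\delta(\ell)=\sum_t t^{2\alpha}<\infty$ and $\sum_t\bar\delta(t)=\sum_t t^\alpha=\infty$, while $\epsilon_i(t)\in(-\pi_i,\infty)$ decays geometrically. Thus all hypotheses of Lemma \ref{L4.2}(3) are met, which yields $(\mathcal A2)$ and $(\mathcal A3)$, and Theorem \ref{Tmain} then gives the asserted convergence $x_i(t)\to x^*$.

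I expect the main obstacle to be the geometric decay of $\epsilon_i(t)=(P^t)_{ii}-\pi_i$: everything else is a bookkeeping reduction to Lemma \ref{L4.2}(3), but this estimate is what licenses that reduction. The clean route is through the ergodicity coefficient, bounding the entrywise deviation of $P^t$ from $1_{N}\pi^\top$ by $2\tau(P^t)$ and then invoking the exponential bound $\tau(P^t)\le C\lambda^t$ of Lemma \ref{L4.2}(1), rather than appealing directly to a Perron--Frobenius spectral gap. A minor point to handle is the convention at small $t$, where $(P^t)_{ii}$ may vanish before $t$ reaches $T$; this affects only finitely many terms and is irrelevant to the tail sums governing $(\mathcal A2)$--$(\mathcal A3)$.
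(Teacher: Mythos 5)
Your proposal is correct and follows essentially the same route as the paper: realize the scheme as \eqref{frame2} with $P(t)\equiv P$ and $\delta_i(t)=ct^\alpha/z_{ii}(t)$, get $(\mathcal A1)$ from primitivity, note $\pi(t)\equiv\pi$, show $z_{ii}(t)\to\pi_i$ geometrically, and conclude via Lemma \ref{L4.2}(3) and Theorem \ref{Tmain}. The only difference is cosmetic but welcome: where the paper simply cites Seneta for the geometric convergence $P^t\to 1_N\pi^\top$, you derive it self-containedly from the bound $|(P^t)_{ii}-\pi_i|\le 2\tau(P^t)\le 2C\lambda^t$ using Lemma \ref{L4.2}(1), and you also flag (as the paper does not) the possible vanishing of $z_{ii}(t)$ for small $t$ when $P$ has zero diagonal entries.
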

\begin{proof}
In Example \ref{E3.2}, we see that $x_i (t)$ satisfies \eqref{frame} with $P (t) \equiv P$ and $\delta_i(t) = \frac{\delta (t)}{z_{ii}(t)}$. The condition $(\mathcal A1)$ is satisfied for any primitive $P$, and we have $\pi(0)=\pi(1)=\dots=:\pi$. A standard theory of row-stochastic matrices yields that as $t\to\infty$, $Z(t)=P^t\to 1_{N}\pi$, i.e., $z_{ii}(t)\to\pi_i$ $(\forall~i)$ geometrically fast (see \cite{Seneta}). Hence if we choose $\delta(t)=ct^\alpha$ with $\alpha\in[-1,-1/2)$ and $c>0$, then $(\mathcal A 2,3)$ are satisfied. 
\end{proof}
\subsection{Convergence theorem for Example \ref{E3.3}}
\begin{cor}\label{cor-5-3}
Let $Y(t), W(t)$ be a solution to the following algorithm:
\begin{equation*}
\begin{cases}
\displaystyle  y_i(t+1)=\sum_{j=1}^N a_{ij}(t)y_j(t),\quad1\leq i\leq N, \quad t\geq0, \\
\displaystyle  w_i(t+1)=\sum_{j=1}^N a_{ij}(t)w_j(t)- ct^\alpha  g_i(t),\quad g_i(t)\in\partial f_i\left(\frac{w_i(t)}{y_i(t)}\right),
\end{cases}
\end{equation*}
with $\alpha\in[-1,-1/2)$ and $c>0$.
Suppose that $Y(0)>0$ and $({\mathcal A} 1*)$ hold. 
\begin{itemize}
\item ($\mathcal A1*$): The sequence $\{A(t)\}_{t\geq0}$ of $N\times N$ column-stochastic matrices with no zero rows satisfy
\[
a^+:=\inf_{t\geq0}\left[\min_{i,j:~a_{ij}(t)>0}a_{ij}(t)\right]>0
\]
and
\[
A(t+T,t):=A(t+T-1)\dots A(t+1)A(t)>0\quad (\forall~t\geq0)
\]
for some $T\in\mathbb N$.
\end{itemize}
Then there exists some minimizer $x^*$ of $f$ such that $\lim\limits_{t\to\infty} \frac{w_i(t)}{y_i(t)}=x^*$ for all $1\leq i\leq N$. 
\end{cor}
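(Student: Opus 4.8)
The plan is to reduce the corollary to Theorem~\ref{Tmain} by viewing the algorithm as the instance of the framework~\eqref{frame2} constructed in Example~\ref{E3.3}. Writing $D(t):=\operatorname{diag}(Y(t))$, I would set $P(t):=D(t+1)^{-1}A(t)D(t)$ and $x_i(t):=w_i(t)/y_i(t)$, so that the step-sizes become $\delta_i(t)=\theta_i(t)/y_i(t+1)=ct^\alpha/y_i(t+1)$. The whole argument then reduces to verifying $(\mathcal A1)$--$(\mathcal A3)$ for this $P(t)$ and $\Delta(t)$, after which Theorem~\ref{Tmain} yields a minimizer $x^*$ with $w_i(t)/y_i(t)\to x^*$.

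The one genuine estimate needed is a uniform two-sided bound $0<c_-\le y_i(t)\le S$ on the auxiliary variables, where $S:=\sum_j y_j(0)$. Positivity $Y(t)>0$ follows from $Y(0)>0$ and the no-zero-row hypothesis exactly as in Example~\ref{E3.3}. For the upper bound I would use that column-stochasticity of $A(t)$ conserves the total mass, $\sum_i y_i(t)\equiv S$, whence $y_i(t)\le S$. For the lower bound I would note that every positive entry of each $A(s)$ is at least $a^+$, so every (necessarily nonzero) entry of the positive product $A(t+T,t):=A(t+T-1)\cdots A(t)$ is at least $(a^+)^T$; then $y_i(t+T)=\sum_j [A(t+T,t)]_{ij}\,y_j(t)\ge (a^+)^T S$, and the finitely many initial values remain positive, yielding $y_i(t)\ge c_-$ for all $t$.

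With these bounds in hand, $(\mathcal A1)$ is immediate: $P(t)$ is row-stochastic by \eqref{eq-3-1}, its entries $p_{ij}(t)=a_{ij}(t)y_j(t)/y_i(t+1)$ are positive exactly when $a_{ij}(t)>0$ and are then at least $a^+c_-/S$, so $p^+>0$; and the diagonal conjugations telescope to $P(t+T,t)=D(t+T)^{-1}A(t+T,t)D(t)>0$. The key structural observation for the remaining two conditions is that the absolute probability vectors are explicit, $\pi_i(t)=y_i(t)/S$: indeed $\pi(t+1)^\top P(t)=S^{-1}1_N^\top A(t)D(t)=S^{-1}1_N^\top D(t)=\pi(t)^\top$, using $Y(t+1)^\top D(t+1)^{-1}=1_N^\top$ and column-stochasticity. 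Consequently $\pi_i(t+1)\delta_i(t)=ct^\alpha/S$ is \emph{independent of $i$}, so the oscillation term in $(\mathcal A3)$ vanishes identically and that half of $(\mathcal A3)$ is trivial; moreover $\|\Delta(t)\pi(t+1)\|=(ct^\alpha/S)\|1_N\|$ has divergent sum because $\alpha\ge -1$, while $\|\Delta(t)\|\asymp t^\alpha$ with $2\alpha<-1$ gives the convergence required in $(\mathcal A2)$.

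I expect the uniform lower bound on $y_i(t)$ to be the main obstacle, as it is the only place that genuinely uses both the column-stochasticity and the product-positivity built into $(\mathcal A1*)$, and it is what simultaneously powers $p^+>0$ and the step-size comparisons. Once that bound and the identification $\pi_i(t)=y_i(t)/S$ are in place, the verification of $(\mathcal A1)$--$(\mathcal A3)$ and the appeal to Theorem~\ref{Tmain} are routine.
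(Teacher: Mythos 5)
Your proposal is correct and follows essentially the same route as the paper: the same substitution $x_i(t)=w_i(t)/y_i(t)$, $P(t)=\operatorname{diag}(Y(t+1))^{-1}A(t)\operatorname{diag}(Y(t))$, $\delta_i(t)=ct^\alpha/y_i(t+1)$, the same identification $\pi(t)=Y(t)/\bigl(Y(0)^\top 1_N\bigr)$, and the same two-sided bounds on $y_i(t)$ (mass conservation plus the $(a^+)^T$ lower bound from product positivity) to verify $(\mathcal A1)$. The only cosmetic difference is that the paper delegates $(\mathcal A2)$--$(\mathcal A3)$ to Lemma \ref{L4.2}(3) with $\theta_i(t)=ct^\alpha$, whereas you check them directly via the vanishing oscillation term; in fact your uniform bound $p_{ij}(t)\ge a^+c_-/S$ handles the initial times $0\le t<T$ slightly more cleanly than the paper's displayed estimate for $p^+$.
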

\begin{proof}
We may write the above scheme as
\begin{equation}\label{eq-5-4}
\frac{w_i (t+1)}{y_i (t+1)} = \sum_{j=1}^N \frac{a_{ij}(t)y_j(t)}{y_i (t+1)} \frac{w_j (t)}{y_j(t)} - \frac{ct^{\alpha}}{y_i (t+1)} g_i (t).
\end{equation}
We set $P(t) = \textrm{diag} (Y(t+1))^{-1} A(t) \textrm{diag}(Y(t))$ for $t \geq 0$. Then $P(t)$ is row-stochastic as checked in \eqref{eq-3-1}. We take $\delta_i(t)=\frac{ct^\alpha}{y_i(t+1)}$ with $\alpha\in[-1,-1/2)$ and $c>0$. 
Then $x_i (t) = \frac{w_i (t)}{y_i (t)}$ satisfies
\begin{equation}
x_i (t+1) = \sum_{j=1}^N p_{ij}(t) x_j (t) - \delta_i (t) g_i (t),\quad g_i (t) \in \partial f_i (x_i (t)).
\end{equation}
We aim to show that $P(t)$ and $\delta_i (t)$ satify the assumption $(\mathcal A 1)-(\mathcal A 3)$. To find the absolute probability vectors of $P(t)$, we note that
\begin{align*}
Y(t+1)^\top P(t) &=Y(t+1)^\top \operatorname{diag}(Y(t+1))^{-1} A(t)\operatorname{diag}(Y(t)) \\
&=(1_{N})^\top A(t) \operatorname{diag}(Y(t))\\
&=(1_{N})^\top \operatorname{diag}(Y(t))\\
&=Y(t)^\top.
\end{align*}
By multiplying $1_{N}$ to both sides, we also get 
\[
Y(t+1)^\top1_{N}=Y(t)^\top1_{N}=\dots=Y(0)^\top 1_{N}.
\]
Therefore, to show that $\pi(t)=\frac{1}{Y(0)^\top 1_{N}}Y(t)$, it only remains to prove that the condition ($\mathcal A1*$) combined with $Y(0)>0$ is a sufficient condition for $(\mathcal A1)$, which implies the uniqueness of the set of absolute probability vectors.
Indeed, we have
\[
A(t+T,t)>0\quad\Rightarrow\quad P(t+T,t)=\operatorname{diag}(Y(t+T))^{-1} A(t+T,t)\operatorname{diag}(Y(t))>0.
\]
The condition $p^+>0$ can be proved in the following way: for $t\geq T$, we have
\[
Y(t )=A(t ,t-T)Y(t-T)\geq (a^+)^{T} 1_{N}(1_{N})^\top Y(t-T)=\left((a^+)^{T} (1_{N})^\top Y(0)\right) 1_{N}
\]
and
\[
Y(t+1)\leq 1_{N} (1_{N})^\top Y(t+1) =\left( (1_{N})^\top Y(0)\right) 1_{N}
\]
so
\[
P(t)=\operatorname{diag}(Y(t+1))^{-1} A(t)\operatorname{diag}(Y(t))\geq {(a^+)^{T} A(t),}\quad t\geq T.
\]
Hence 
\[
p^+\geq\min\left\{\min_{0\leq t< T}\left[\min_{i,j:~a_{ij}(t)>0}a_{ij}(t)\right],{(a^+)^{T+1}  }\right\}>0.
\]
Finally, note that $\delta_i(t)=\frac{ct^\alpha}{y_i(t+1)}$ with $\alpha\in[-1,-1/2)$ and $c>0$, i.e., $\theta_i(t)= ct^\alpha$ satisfy the condition in Lemma \ref{L4.2} (3), thereby satisfying $(\mathcal A2),(\mathcal A3).$ Summing up, we obtain the desired result.
\end{proof}
Recall that in Example \ref{E3.3} we have used the substitution $x_i(t)=\frac{w_i(t)}{y_i(t)}$. We remark that we can get an analogous statement of Proposition \ref{P4.4} for $(\mathcal A1*)$ instead of $(\mathcal A1)$.
\subsection{Convergence theorem for Example \ref{E3.4}}
\begin{cor}
Let $\hat X(t), \hat Y(t), \hat W(t)$ be a solution to the following algorithm, originated from \cite{NO2}: 
\begin{equation*}
\begin{cases}
\displaystyle\hat  y_i(t+1)=\sum_{j=1}^N\hat a_{ij}(t)\hat  y_j(t) \in\mathbb R_+,\\
\displaystyle {\hat  x_i(t)=\hat  w_i(t)- c t^\alpha\hat g_i(t),\quad \hat  g_i(t)\in\partial f_i\left(\frac{\hat  w_i(t)}{\hat y_i(t)}\right),}\\
\displaystyle \hat  w_i(t+1)=\sum_{j=1}^N\hat a_{ij}(t)\hat  x_j(t) \in\mathbb R^d.
\end{cases}
\end{equation*}
with $\alpha\in[-1,-1/2)$ and $c>0$.
Suppose that $\hat Y(0)>0$ and $(\hat{\mathcal A} 1*)$ hold.
\begin{itemize}
\item ($\hat{\mathcal A}1*$): The sequence $\{\hat A(t)\}_{t\geq0}$ of $N\times N$ column-stochastic matrices with no zero rows satisfy
\[
\hat a^+:=\inf_{t\geq0}\left[\min_{i,j:~\hat a_{ij}(t)>0}\hat a_{ij}(t)\right]>0
\]
and
\[
\hat A(t+T,t):=\hat A(t+T-1)\dots \hat A(t+1)\hat A(t)>0\quad (\forall~t\geq0)
\]
for some $T\in\mathbb N$.
\end{itemize}
 Then there exists some minimizer $x^*$ of $f$ such that $\lim\limits_{t\to\infty} \frac{\hat w_i(t)}{\hat y_i(t)}=\lim\limits_{t\to\infty} \frac{\hat x_i(t)}{\hat y_i(t)}=x^*$ for all $1\leq i\leq N$. 
\end{cor}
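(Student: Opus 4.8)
The plan is to reduce this algorithm to the master scheme \eqref{frame2} by the same odd--even doubling used to derive the subgradient-push method in Example \ref{E3.4}, and then invoke Theorem \ref{Tmain}; this is the exact analogue of the argument in Corollary \ref{cor-5-1}, now combined with the normalization $x_i=w_i/y_i$ exploited in Corollary \ref{cor-5-3}. Concretely, set $\hat P(t):=\operatorname{diag}(\hat Y(t+1))^{-1}\hat A(t)\operatorname{diag}(\hat Y(t))$, which is row-stochastic by \eqref{eq-3-1}, and define the doubled data
\[
P(2t):=I_N,\quad P(2t+1):=\hat P(t),\quad \delta_i(2t):=\frac{ct^\alpha}{\hat y_i(t)},\quad \delta_i(2t+1):=0,
\]
together with $x_i(2t):=\hat w_i(t)/\hat y_i(t)$ and $x_i(2t+1):=\hat x_i(t)/\hat y_i(t)$. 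A direct computation reversing the steps of Example \ref{E3.4} — using $\hat p_{ij}(t)=\hat a_{ij}(t)\hat y_j(t)/\hat y_i(t+1)$ and $\hat w_i(t+1)=\sum_j\hat a_{ij}(t)\hat x_j(t)$ — shows that $X(s)=[x_1(s),\dots,x_N(s)]^\top$ solves \eqref{frame2} with these $P(s)$, $\Delta(s)=\operatorname{diag}(\delta_1(s),\dots,\delta_N(s))$, and $g_i(s)\in\partial f_i(x_i(s))$. It then suffices to verify $(\mathcal A1)$--$(\mathcal A3)$ for the doubled sequence.

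For $(\mathcal A1)$ I would first recall from the proof of Corollary \ref{cor-5-3} that, under $(\hat{\mathcal A}1*)$ and $\hat Y(0)>0$, the sequence $\{\hat P(t)\}$ itself satisfies $(\mathcal A1)$ (with some window $T$ and some positive lower bound) and that $\hat Y(t)$ is bounded above and below by positive constants, with conserved sum $S:=(1_N)^\top\hat Y(0)$. Inserting the identity factors $P(2t)=I_N$ destroys neither property: the nonzero entries of the doubled sequence are either $1$ or entries of $\hat P(t)$, so $p^+$ stays positive; and any backward product over a window of length $2T+2$ contains, at its right end, a consecutive block equal to $\hat P(t_1+T+1,t_1)=\hat P(t_1+T)\,\hat P(t_1+T,t_1)$, which is positive because $\hat P(t_1+T,t_1)>0$ and left-multiplication by a row-stochastic matrix preserves strict positivity. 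Hence $P(s+2T+2,s)>0$ for all $s$, giving $(\mathcal A1)$.

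The heart of the matter is $(\mathcal A3)$, where the push-sum normalization does the work. Since $P(2t)=I_N$ and $P(2t+1)=\hat P(t)$, the defining relation $\pi(s+1)^\top P(s)=\pi(s)^\top$ forces $\pi(2t)=\pi(2t+1)$ and $\pi(2t+2)^\top\hat P(t)=\pi(2t+1)^\top$; comparing with the identity $\hat\pi(t+1)^\top\hat P(t)=\hat\pi(t)^\top$ established in Corollary \ref{cor-5-3}, where $\hat\pi(t)=\hat Y(t)/S$, the uniqueness from Lemma \ref{L4.2}(2) yields $\pi(2t)=\pi(2t+1)=\hat Y(t)/S$. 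Consequently, for the only nonvanishing stepsizes,
\[
\pi_i(2t+1)\,\delta_i(2t)=\frac{\hat y_i(t)}{S}\cdot\frac{ct^\alpha}{\hat y_i(t)}=\frac{ct^\alpha}{S},
\]
which is independent of $i$. Thus the second sum in $(\mathcal A3)$ has all-equal summands, its inner maximum vanishes identically, while $\|\Delta(2t)\pi(2t+1)\|=(ct^\alpha/S)\|1_N\|$ gives $\sum_s\|\Delta(s)\pi(s+1)\|=\infty$ since $\alpha\geq-1$. For $(\mathcal A2)$, the lower bound on $\hat y_i(t)$ gives $\|\Delta(2t)\|\lesssim t^\alpha$, so $\sum_s\|\Delta(s)\|\sup_{\ell\geq s}\|\Delta(\ell)\|\lesssim\sum_t t^{2\alpha}<\infty$ because $2\alpha<-1$. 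With $(\mathcal A1)$--$(\mathcal A3)$ in hand, Theorem \ref{Tmain} gives $x_i(s)\to x^*$, and reading off the even and odd subsequences yields $\hat w_i(t)/\hat y_i(t)\to x^*$ and $\hat x_i(t)/\hat y_i(t)\to x^*$. The only genuinely delicate point is the survival of $(\mathcal A1)$ under the doubling together with the correct bookkeeping of $\pi$; the remaining estimates are routine once Corollaries \ref{cor-5-1} and \ref{cor-5-3} are available.
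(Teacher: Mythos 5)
Your proposal is correct and coincides with the paper's own Proof 3 of this corollary: the same even--odd doubling with $x_i(2t)=\hat w_i(t)/\hat y_i(t)$, $x_i(2t+1)=\hat x_i(t)/\hat y_i(t)$, $P(2t)=I_N$, $P(2t+1)=\operatorname{diag}(\hat Y(t+1))^{-1}\hat A(t)\operatorname{diag}(\hat Y(t))$, followed by the identification $\pi(t)=Y(t)/\bigl(Y(0)^\top 1_N\bigr)$ and an application of Theorem \ref{Tmain}. The only difference is that you spell out the verification of $(\mathcal A1)$--$(\mathcal A3)$ for the doubled sequence (positivity of windowed products, cancellation in $\pi_i(2t+1)\delta_i(2t)=ct^\alpha/S$), which the paper delegates to the argument of Corollary \ref{cor-5-3}; these details are correct.
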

\begin{proof}[Proof 1]
We let 
\begin{equation}
w_i (2t) =  \hat{w}_i (t),\quad w_i (2t+1) =  \hat{x}_i (t) \quad \textrm{and} \quad y_i(2t+1)=y_i(2t)=\hat y_i(t).
\end{equation}
We write down the first and the second lines of the algorithm as
\begin{equation*}
\begin{cases}
\displaystyle  y_i(2t+1)= y_i(2t),\quad1\leq i\leq N, \quad t\geq0, \\
\displaystyle  w_i(2t+1)= w_i(2t)- ct^\alpha  g_i(2t),\quad g_i(2t)\in\partial f_i\left(\frac{w_i(2t)}{y_i(2t)}\right).
\end{cases}
\end{equation*}
Next we write the first and the third lines of the algorithm as
\begin{equation*}
\begin{cases}
\displaystyle  y_i(2t+2)=\sum_{j=1}^N \hat a_{ij}(t)y_j(2t+1),\quad1\leq i\leq N, \quad t\geq0, \\
\displaystyle  w_i(2t+2)=\sum_{j=1}^N \hat a_{ij}(t)w_j(2t+1).
\end{cases}
\end{equation*}
Now we set $\theta_i (2t) =  c(2t)^{\alpha}  $, $\theta_i(2t+1)=0$, $A(2t) = I_N$, and $A(2t+1) = \hat A(t)$. 

Then we see that $W(t), Y(t)$ satisfies the algorithm \eqref{algorithm3}. By following the same argument as in the proof of Corollary \ref{cor-5-3}, we get the desired result. 
\end{proof}

\begin{proof}[Proof 2]
We let
\begin{equation}
\tilde x_i (t+1) = \frac{\hat{w}_i (t+1)}{\hat{y}_i (t+1)}.
\end{equation}
Then the scheme is written as
\begin{equation}
\begin{split}
\tilde x_i (t+1) & = \sum_{j=1}^N \frac{\hat{a}_{ij} (t)}{\hat{y}_i (t+1)}  \Big( \hat{y}_j (t) \tilde x_j (t) - ct^\alpha\tilde{g}_j (t)\Big)
\\
& = \sum_{j=1}^N \frac{\hat{a}_{ij} (t) \hat{y}_j (t)}{\hat{y}_i (t+1)} \Big( \tilde x_j (t) - \frac{ct^\alpha}{\hat{y}_j (t)} \tilde{g}_j (t)\Big), \quad \tilde  g_i(t)\in\partial f_i\left(\tilde x_i(t)\right)
\end{split}
\end{equation}
Thus we have
\begin{equation}
\tilde x_i (t+1)=: \sum_{j=1}^N \tilde{p}_{ij}(t) \Big(\tilde x_j (t) - \tilde\delta_j (t) \tilde  g_j(t)\Big), \quad \tilde  g_i(t)\in\partial f_i\left(\tilde x_i(t)\right)
\end{equation}
where $\tilde{p}_{ij} (t)= \frac{\hat{a}_{ij} (t) \hat{y}_j (t)}{\hat{y}_i (t+1)}$ and $\tilde\delta_j (t) = \frac{ct^\alpha}{\hat{y}_j (t)}$. It corresponds to the algorithm in Corollary \ref{cor-5-1}. Moreover, from the proof of Corollary \ref{cor-5-3} we see that $(\tilde{\mathcal A} 1)-(\tilde{\mathcal{A}} 3)$ are satisfied for $\tilde{P}$ and $\tilde{\Delta}$. Therefore we may apply Corollary \ref{cor-5-1} to obtain the convergence result. 
\end{proof}
\begin{proof}[Proof 3]
We let 
\begin{equation}
x_i (2t) = \frac{\hat{w}_i (t)}{\hat{y}_i (t)}\quad \textrm{and}\quad x_i (2t+1) = \frac{\hat{x}_i (t)}{\hat{y}_i (t)}.
\end{equation}
We write the second line of the algorithm as
\begin{equation}
\frac{\hat{x}_i (t)}{\hat{y}_i (t)} = \frac{\hat{w}_i (t)}{\hat{y}_i (t)} - \frac{ct^{\alpha}}{\hat{y}_i (t)} \hat{g}_i (t), \quad \hat{g}_i (t) \in \partial f_i \bigg( \frac{\hat{w}_i (t)}{\hat{y}_i (t)}\bigg).
\end{equation}
Now we set $\delta_i (2t) = \frac{ct^{\alpha}}{\hat{y}_i (t)} $, $g_i (2t) = \hat{g}_i (t)$, and $p_{ij}(2t) = \delta_{ij}$. Then the above equality is written as
\begin{equation}\label{eq-5-5}
x_i (2t+1) = x_j (2t) - \delta_i (2t) g_i (2t),\quad g_i (2t) \in \partial f_i (x_i (2t)).
\end{equation}
Next we write down the third line of the algorithm as
\begin{equation}
\frac{\hat{w}_i (t+1)}{\hat{y}_i (t+1)} = \sum_{j=1}^N \frac{\hat{a}_{ij} (t) \hat{y}_j (t)}{\hat{y}_i (t+1)} \frac{\hat{x}_j (t)}{\hat{y}_j (t)}.
\end{equation}
Let $p_{ij} (2t+1) =\frac{\hat{a}_{ij} (t) \hat{y}_j (t)}{\hat{y}_i (t+1)}$ and $\delta_{i} (2t+1) =0$. Then this is written in terms of $x_i$ as
\begin{equation}\label{eq-5-6}
x_{i}(2t+2) = \sum_{j=1}^N p_{ij}(2t+1) x_i (2t+1).
\end{equation}
From \eqref{eq-5-5} and \eqref{eq-5-6} we see that $x_i (t)$ satisfies \eqref{frame}. Now it remains to check that $P(t)=(p_{ij}(t))$ and $\Delta(t)=\operatorname{diag}(\delta_1(t),\dots,\delta_N(t))$ satisfy the conditions $(\mathcal A1), (\mathcal A2), (\mathcal A3)$. Note that 
\[
P(t) = \textrm{diag} (Y(t+1))^{-1} A(t) \textrm{diag}(Y(t)),
\]
where
\[
A(2t)=I_N,\quad A(2t+1)=\hat A(t), \quad Y(2t+1)=Y(2t)=\hat Y(t).
\]
By following the same argument as in the proof of Corollary \ref{cor-5-3}, we can see that $\pi(t)=\frac{1}{Y(0)^\top 1_{N}}Y(t)$, and thus $(\mathcal A1)-(\mathcal A3)$ hold. Thus we have the desired result. 
\end{proof}
\section{Proof of Theorem \ref{Tmain}}

In this section we give the proof of Theorem \ref{Tmain}. For this aim, we state and prove preliminary lemmas which are inspired by \cite{NO2}.
\begin{lem}\label{lem-6-1}Suppose $(\mathcal A1)$ hold. Assume that a sequence $\{b(t)\}_{t\geq0}$ of vectors in $\mathbb R^{N\times1}$ satisfy 
\[
\sup_{t\geq0}\|b(t)\|_1<\infty\quad\mbox{and}\quad b(t)^\top 1_{N}=0\quad (\forall ~t\geq0).
\]
Then the sequence $\{X(t)\}_{t \geq 0}$ generated by \eqref{frame2} satisfy the following: 
\begin{enumerate}
\item 
We have
\[
\left\|X(t)^\top b(t)\right\|_1\leq  C\lambda^{t}+C\sum_{k=0}^{t-1}\lambda^{t-k-1}\|\Delta(k) \|_\infty,
\]
for some constant $C>0$ independent of $t\geq1$.
\item If $\lim_{t\to\infty}\|\Delta(t)\|_\infty=0$ holds, then
\[
\lim_{t\to\infty}\left\|X(t)^\top b(t)\right\|_1=0.
\]
\item If $(\mathcal A2)$ holds, then
\[
\sum_{t=1}^\infty\|\Delta(t)\|_\infty\left\|X(t)^\top b(t)\right\|_1<\infty.
\]
\end{enumerate}
\end{lem}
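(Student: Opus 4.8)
The plan is to unroll the linear recursion \eqref{frame2} and then exploit the orthogonality $b(t)^\top 1_N=0$ through the ergodicity coefficient $\tau$. First I would iterate \eqref{frame2} from the initial time to obtain
\[
X(t)=P(t,0)X(0)-\sum_{k=0}^{t-1}P(t,k+1)\Delta(k)G(k),
\]
which follows by a short induction using $P(t)P(t,k+1)=P(t+1,k+1)$ and $P(t+1,t+1)=I_N$. Transposing and pairing with $b(t)$, and using $\Delta(k)^\top=\Delta(k)$, gives
\[
X(t)^\top b(t)=X(0)^\top P(t,0)^\top b(t)-\sum_{k=0}^{t-1}G(k)^\top\Delta(k)P(t,k+1)^\top b(t).
\]

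The key point is that every matrix $P(\cdot,\cdot)^\top$ here is applied to the vector $b(t)$, which is orthogonal to $1_N$. By Lemma \ref{L2.4}(1), any row-stochastic $P$ satisfies $\|P^\top u\|_1\le\tau(P)\|u\|_1$ whenever $u^\top 1_N=0$; combined with the geometric decay $\tau(P(t,k+1))\le C\lambda^{t-k-1}$ from Lemma \ref{L4.2}(1), this yields $\|P(t,k+1)^\top b(t)\|_1\le C\lambda^{t-k-1}\sup_s\|b(s)\|_1$. For each summand I would write $G(k)^\top\Delta(k)v=\sum_i\delta_i(k)g_i(k)v_i$ and estimate $\|G(k)^\top\Delta(k)v\|_1\le L\|\Delta(k)\|_\infty\|v\|_1$, where $L:=\max_i L_i<\infty$ by the standing boundedness assumption on subgradients and $\|\Delta(k)\|_\infty=\max_i\delta_i(k)$. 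Substituting these bounds, using $\tau(P(t,0))\le C\lambda^t$ for the initial term, and absorbing $\sup_s\|b(s)\|_1$, $\|X(0)\|$ and $L$ into a single constant proves part (1).

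Part (2) reduces to the observation that $C\lambda^t\to0$ while the convolution $\sum_{k=0}^{t-1}\lambda^{t-k-1}\|\Delta(k)\|_\infty$ of a summable geometric kernel with a null sequence tends to $0$; I would verify this by splitting the sum at a threshold $K$ beyond which $\|\Delta(k)\|_\infty<\epsilon$, bounding the tail by $\epsilon/(1-\lambda)$ and the head by a fixed constant times $\lambda^{t-K}$. For part (3), writing $d_t:=\|\Delta(t)\|_\infty$, I would substitute part (1) into $\sum_{t\ge1}d_t\|X(t)^\top b(t)\|_1$; the term $\sum_t d_t\lambda^t$ converges since $(\mathcal A2)$ forces $d_t\to0$, and for the double sum $\sum_{t\ge1}\sum_{k=0}^{t-1}\lambda^{t-k-1}d_td_k$ I would sum over $j:=t-k-1\ge0$ first, reindex the inner sum by $m:=k$ to get $\sum_{j\ge0}\lambda^j\sum_{m\ge0}d_{m+j+1}d_m$, and bound $d_{m+j+1}\le\sup_{\ell\ge m}d_\ell$.

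The main obstacle is the bookkeeping in part (3): finiteness hinges on the exact form of $(\mathcal A2)$, namely a sum of $d_t$ against $\sup_{\ell\ge t}d_\ell$ rather than merely $\sum d_t^2$, and the reindexing must keep the shifted index $m+j+1\ge m$ so that the inner sum is dominated by $\sum_m d_m\sup_{\ell\ge m}d_\ell$ uniformly in $j$; the remaining geometric series in $j$ then converges. Everything else is a matter of equivalence of finite-dimensional norms and the contraction estimate already established for $\tau$.
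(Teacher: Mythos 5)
Your proposal is correct and follows essentially the same route as the paper's proof: unroll \eqref{frame2}, pair with $b(t)$, and control the backward products via Lemma \ref{L2.4}(1) and the geometric decay $\tau(P(t,k+1))\leq C\lambda^{t-k-1}$ from Lemma \ref{L4.2}(1), with the subgradient bound $L=\max_i L_i$ absorbing $G(k)$. Your treatment of parts (2) and (3) — splitting the convolution at a threshold, and reindexing the double sum so that the shifted term is dominated by $\sup_{\ell\geq m}\|\Delta(\ell)\|_\infty$ and then invoking $(\mathcal A2)$ — is the same argument the paper carries out (there by exchanging the order of the $t,k$ sums), so there is nothing to add.
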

\begin{proof}
(1) 
  Using \eqref{frame} iteratively, we have
\[
X(t)=P(t,0)X(0)-\sum_{k=0}^{t-1}P(t,k+1)\Delta(k)G(k),\quad t\geq1.
\]
 Multiplying $b(t)$ to both sides and using the triangle inequality, we deduce
\begin{equation}\label{eq-6-0}
\begin{split}
\left\| X(t)^\top b(t)\right\|_1&=\left\|X(0)^\top P(t,0)^\top b(t)-\sum_{k=0}^{t-1} G(k)^\top \Delta(k)^\top P(t,k+1)^\top b(t)\right\|_1\\
&\leq  \|X(0)^\top \|_1\| P(t,0)^\top b(t)\|_1+\sum_{k=0}^{t-1} \| G(k)^\top \|_1\|\Delta(k)^\top \|_1\|P(t,k+1)^\top b(t) \|_1
\\
&\leq  \|X(0)  \|_\infty\tau( P(t,0))\| b(t)\|_1+\sum_{k=0}^{t-1} \|G(k) \|_\infty\|\Delta(k) \|_\infty\tau(P(t,k+1))\| b(t) \|_1,
\end{split}
\end{equation}
where we used Lemma \ref{L2.4} in the second inequality. 
By Lemma \ref{L4.2} (1) we have $\tau(P(t,0)) \leq \tilde C \lambda^t$ for some $\tilde C>0$ and $0 < \lambda <1$. We also have
\[
\sup_{k\geq0}\|G(k) \|_\infty=\sup_{k\geq0}\max_{1\leq i\leq N}\|g_i(k)\|_1\leq \max_{1\leq i\leq N}L_i<\infty.
\]
 Using this we bound the right hand side of \eqref{eq-6-0} as 
\begin{equation}\label{eq-6-1}
\begin{split}
\left\| X(t)^\top b(t)\right\|_1&\leq C\lambda^{t}+C\sum_{k=0}^{t-1}\lambda^{t-k-1}\|\Delta(k) \|_\infty.
\end{split}
\end{equation}
(2) By (1), it suffices to show that 
\begin{equation}\label{C-5}
\lim_{t\to\infty}\sum_{k=0}^{t-1}\lambda^{t-k-1}\|\Delta(k)\|_\infty=0.
\end{equation}
For any $1\leq m\leq t-1$ we have
\begin{align*}
\sum_{k=0}^{t-1}\lambda^{t-k-1}\|\Delta(k)\|_\infty&\leq \sum_{k=0}^{t-1}\lambda^{t-k-1}\sup_{\ell\geq k}\|\Delta(\ell)\|_\infty\\
&\leq\Big(\sup_{\ell\geq 0}\|\Delta(\ell)\|_\infty\Big)\sum_{k=0}^{m-1}\lambda^{t-k-1}+\Big(\sup_{\ell\geq m}\|\Delta(\ell)\|_\infty\Big)\sum_{k=m}^{t-1}\lambda^{t-k-1}\\
&\leq \Big(\sup_{\ell\geq 0}\|\Delta(\ell)\|_\infty\Big)\frac{\lambda^{t-m}}{1-\lambda}+\Big(\sup_{\ell\geq m}\|\Delta(\ell)\|_\infty\Big)\frac{1}{1-\lambda}
\end{align*}
By plugging in $m=\lfloor\frac{t}{2}\rfloor$ and sending $t\to\infty$, we can see that \eqref{C-5} holds.\\\\
(3) We apply \eqref{eq-6-1} to obtain
\begin{align*}
&\sum_{t=1}^\infty\|\Delta(t)\|_\infty\left\|X(t)^\top b(t)\right\|_1\\
&\leq \sum_{t=1}^\infty\|\Delta(t)\|_\infty\left(C\lambda^{t}+C\sum_{k=0}^{t-1}\|\Delta(k)\|_\infty\lambda^{t-k-1}\right)\\
&=C\sum_{t=1}^\infty\|\Delta(t)\|_\infty\lambda^{t}+C\sum_{t=1}^\infty\sum_{k=0}^{t-1}\|\Delta(t)\|_\infty\|\Delta(k)\|_\infty\lambda^{t-k-1}. \end{align*}
We bound this as follows
\begin{align*}
&\sum_{t=1}^\infty\|\Delta(t)\|_\infty\left\|X(t)^\top b(t)\right\|_1\\
&\leq C\sum_{t=1}^\infty\Big(\sup_{\ell\geq1}\|\Delta(\ell)\|_\infty\Big)\lambda^{t}+C\sum_{t=1}^\infty \sum_{k=0}^{t-1}\Big(\sup_{\ell\geq k}\|\Delta(\ell)\|_\infty\Big)\|\Delta(k)\|_\infty\lambda^{t-k-1} \\
&= C\frac{\lambda}{1-\lambda}\Big(\sup_{\ell\geq1}\|\Delta(\ell)\|_\infty\Big) +C\sum_{k=0}^\infty \sum_{t=k+1}^{\infty}\Big(\sup_{\ell\geq k}\|\Delta(\ell)\|_\infty\Big)\|\Delta(k)\|_\infty\lambda^{t-k-1} \\
&= C\frac{\lambda}{1-\lambda}\Big(\sup_{\ell\geq1}\|\Delta(\ell)\|_\infty\Big)+C\frac{1}{1-\lambda}\sum_{k=0}^\infty  \Big(\sup_{\ell\geq k}\|\Delta(\ell)\|_\infty\Big)\|\Delta(k)\|_\infty <\infty,
\end{align*}
where we used $(\mathcal A 2)$ in the last inequality. The proof is done.
\end{proof}
\begin{lem}\label{lem-6-2}
Suppose $(\mathcal A 1)$-$(\mathcal A 2)$ holds. Then we have
 \[ \|X(t)\|=O(\sqrt{t}),\quad t\to\infty.
\]
\end{lem}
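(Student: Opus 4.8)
The plan is to iterate the recursion \eqref{frame2} exactly as in Lemma \ref{lem-6-1}, but then, instead of pairing against a mean-zero vector and exploiting the decay of $\tau$, to estimate the full matrix $X(t)$ directly in the $\|\cdot\|_\infty$ norm, using only that backward products of row-stochastic matrices remain row-stochastic. First I would write
\[
X(t)=P(t,0)X(0)-\sum_{k=0}^{t-1}P(t,k+1)\Delta(k)G(k),\qquad t\geq1,
\]
and take $\|\cdot\|_\infty$ on both sides. Since each $P(t,k+1)$ is a product of row-stochastic matrices it is itself row-stochastic, whence $\|P(t,k+1)\|_\infty=1$; combining this with $\|G(k)\|_\infty=\max_i\|g_i(k)\|_1\leq\max_i L_i=:L$ (from the first assumption on $f_i$) and submultiplicativity yields
\[
\|X(t)\|_\infty\leq\|X(0)\|_\infty+L\sum_{k=0}^{t-1}\|\Delta(k)\|_\infty.
\]

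The key step is then to show $\sum_{k=0}^{t-1}\|\Delta(k)\|_\infty=O(\sqrt{t})$. Writing $a_k:=\|\Delta(k)\|_\infty$ and $s_k:=\sup_{\ell\geq k}\|\Delta(\ell)\|_\infty$, one has $a_k\leq s_k$ and hence $a_k^2\leq a_k s_k$, so that $(\mathcal A2)$ gives
\[
\sum_{k=0}^\infty a_k^2\leq\sum_{k=0}^\infty a_k s_k<\infty,
\]
i.e.\ $\{a_k\}$ is square-summable. By the Cauchy--Schwarz inequality,
\[
\sum_{k=0}^{t-1}a_k\leq\sqrt{t}\,\Big(\sum_{k=0}^{t-1}a_k^2\Big)^{1/2}\leq\sqrt{t}\,\Big(\sum_{k=0}^\infty a_k^2\Big)^{1/2},
\]
which is precisely the claimed $O(\sqrt{t})$ growth. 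Substituting back gives $\|X(t)\|_\infty=O(\sqrt{t})$, and the statement for a generic submultiplicative norm follows from the equivalence of norms on the finite-dimensional space $\mathbb R^{N\times d}$.

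Since each inequality above is elementary, I do not expect a genuine obstacle. The only point requiring care is recognizing that $(\mathcal A2)$ delivers \emph{square}-summability of $\|\Delta(k)\|_\infty$ rather than mere summability: summability alone would give only a linear $O(t)$ bound, whereas square-summability is exactly what the Cauchy--Schwarz step upgrades into the $\sqrt{t}$ rate. Compared with Lemma \ref{lem-6-1}, the simplification here is that no ergodicity estimate on $\tau(P(t,k+1))$ is needed, since the crude bound $\|P(t,k+1)\|_\infty=1$ already suffices to control the whole trajectory rather than its disagreement from consensus.
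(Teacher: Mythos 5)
Your proposal is correct and follows essentially the same route as the paper: both bound $\|X(t)\|_\infty$ by $\|X(0)\|_\infty + (\max_i L_i)\sum_{k=0}^{t-1}\|\Delta(k)\|_\infty$ using row-stochasticity (you via $\|P(t,k+1)\|_\infty=1$ on the unrolled recursion, the paper via iterating the one-step inequality $\|X(t+1)\|_\infty\leq\|X(t)\|_\infty+\|\Delta(t)\|_\infty\|G(t)\|_\infty$), and both finish with Cauchy--Schwarz and the square-summability of $\|\Delta(k)\|_\infty$ coming from $(\mathcal A2)$. Your explicit derivation of square-summability from $a_k^2\leq a_k s_k$ is a point the paper leaves implicit, but the argument is identical in substance.
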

\begin{proof}
Using \eqref{frame} and the fact that $L_i =\sup_{z \in \mathbb{R}^d} \sup_{g \in \partial f_i (z)} \|g\|_1 <\infty$, we find
\begin{align*}
\|X(t+1)\|_\infty&=\|P(t)X(t)-\Delta(t)G(t)\|_\infty\\
&\leq\|P(t)\|_\infty \|X(t)\|_\infty+\|\Delta(t)\|_\infty\|G(t)\|_\infty \\
&=\|X(t)\|_\infty+\|\Delta(t)\|_\infty\|G(t)\|_\infty.
\end{align*}
Iterating gives us the following estimate:
\begin{align*}
 \|X(t)\|_\infty &\leq\|X(0)\|_\infty+\sum_{k=0}^{t-1}\|\Delta(k)\|_\infty\|G(k)\|_\infty\\
&\leq\|X(0)\|_\infty+\left(\max_{1\leq i\leq N}L_i\right)\sum_{k=0}^{t-1}\|\Delta(k)\|_\infty.
\end{align*}
Now we use the Cauchy-Schwarz inequality  to deduce
\begin{align*}
\|X(t)\|_\infty&\leq \|X(0)\|_\infty+\left(\max_{1\leq i\leq N}L_i\right)\left(\sum_{k=0}^\infty\|\Delta(k)\|_\infty^2\right)^\frac{1}{2}\sqrt{t} =O(\sqrt{t}),
\end{align*}
where $(\mathcal A 2)$ is used. The proof is done.
\end{proof}
\begin{lem}\label{lem-6-3}
Suppose $(\mathcal A 1)$-$(\mathcal A 2)$ holds. Then for any $u\in\mathbb R^{d\times1}$, we have
\begin{align*}
\left\|X(t+1)^\top \pi(t+1)-u\right\|_2^2&\leq\left\|X(t)^\top \pi(t)-u\right\|_2^2+\|\Delta(t)\|_\infty^2\left(\max_{1\leq i\leq N} L_i\right)^2\\
&+\frac{2}{N} \|\Delta(t) \pi(t+1)\|_1\sum_{i=1}^N \left(f_i(u)-f_i(X(t)^\top \pi(t))\right)\\
&+2 \left(\max_{i,j}|\pi_i(t+1)\delta_i(t)-\pi_j(t+1)\delta_j(t)|\right)\sum_{i=1}^NL_i\left\|X(t)^\top \pi(t) -u\right\|_2\\
&+4\|\Delta(t)\|_\infty\sum_{i=1}^N  L_i \left\|X(t)^\top(\pi(t)-\bold e_i)  \right\|_2.
\end{align*}
\end{lem}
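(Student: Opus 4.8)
The plan is to track the weighted average $\bar x(t) := X(t)^\top \pi(t) \in \mathbb R^{d}$ and show that it performs an approximate gradient-descent step on $f$. First I would transpose \eqref{frame2} and multiply on the right by $\pi(t+1)$; using the absolute probability identity $\pi(t+1)^\top P(t) = \pi(t)^\top$ from \eqref{eq-2-1}, which gives $P(t)^\top \pi(t+1) = \pi(t)$, together with $\Delta(t)^\top = \Delta(t)$, this collapses to the exact recursion
\[
\bar x(t+1) = \bar x(t) - \sum_{i=1}^N \pi_i(t+1)\delta_i(t)\, g_i(t).
\]
Writing $w_i(t) := \pi_i(t+1)\delta_i(t) \geq 0$, I would then expand $\|\bar x(t+1)-u\|_2^2$ by the cosine rule into $\|\bar x(t)-u\|_2^2$, a quadratic term $\big\|\sum_i w_i(t) g_i(t)\big\|_2^2$, and a cross term $-2\inp{\bar x(t)-u}{\sum_i w_i(t) g_i(t)}$.

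The quadratic term is immediate: since $\sum_i \pi_i(t+1)=1$ and $\|g_i(t)\|_2 \leq \|g_i(t)\|_1 \leq L_i \leq \max_k L_k$, one gets $\big\|\sum_i w_i(t)g_i(t)\big\|_2 \leq \|\Delta(t)\|_\infty \max_k L_k$, which yields the $\|\Delta(t)\|_\infty^2(\max_i L_i)^2$ contribution. The heart of the argument is the cross term, which I would handle by decomposing each weight around its mean $\bar w(t) := \frac1N\sum_j w_j(t) = \frac1N\|\Delta(t)\pi(t+1)\|_1$, writing $w_i(t) = \bar w(t) + (w_i(t) - \bar w(t))$ and splitting the cross term accordingly.

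For the mean part $-2\bar w(t)\sum_i \inp{\bar x(t)-u}{g_i(t)}$, I would invoke convexity. Since $g_i(t) \in \partial f_i(x_i(t))$ with $x_i(t) = X(t)^\top \bold e_i$, the subgradient inequality gives $\inp{g_i(t)}{x_i(t)-u} \geq f_i(x_i(t)) - f_i(u)$, and the splitting $\inp{g_i}{\bar x - u} = \inp{g_i}{\bar x - x_i} + \inp{g_i}{x_i - u}$ together with the Lipschitz bounds $|\inp{g_i}{\bar x - x_i}| \leq L_i \|X(t)^\top(\pi(t)-\bold e_i)\|_2$ and $|f_i(x_i(t)) - f_i(\bar x(t))| \leq L_i\|X(t)^\top(\pi(t)-\bold e_i)\|_2$ lets me replace $f_i(x_i(t))$ by $f_i(\bar x(t))$ at the cost of two consensus-error terms. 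This produces the $\frac2N\|\Delta(t)\pi(t+1)\|_1 \sum_i(f_i(u)-f_i(\bar x(t)))$ term and, since $\bar w(t) \leq \frac1N\|\Delta(t)\|_\infty \leq \|\Delta(t)\|_\infty$, a remainder bounded by $4\|\Delta(t)\|_\infty \sum_i L_i \|X(t)^\top(\pi(t)-\bold e_i)\|_2$.

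For the fluctuation part $-2\sum_i(w_i(t)-\bar w(t))\inp{\bar x(t)-u}{g_i(t)}$, I would apply Cauchy--Schwarz, $|\inp{\bar x(t) - u}{g_i(t)}| \leq L_i \|\bar x(t)-u\|_2$, and bound $|w_i(t)-\bar w(t)| \leq \max_{i,j}|w_i(t)-w_j(t)| = \max_{i,j}|\pi_i(t+1)\delta_i(t)-\pi_j(t+1)\delta_j(t)|$, which gives the third term. Summing the three contributions yields the claimed inequality. I expect the main obstacle to be the bookkeeping in the mean part: correctly transferring the convexity estimate from the local iterates $x_i(t)$ to the common average $\bar x(t)$ via Lipschitz continuity, since this is precisely where the consensus errors $\|X(t)^\top(\pi(t)-\bold e_i)\|_2$ enter and where the weight-dispersion quantity $\max_{i,j}|\pi_i(t+1)\delta_i(t)-\pi_j(t+1)\delta_j(t)|$ appears, to be controlled later by $(\mathcal A3)$.
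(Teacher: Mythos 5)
Your proof is correct and follows essentially the same route as the paper's: the exact recursion for $X(t)^\top\pi(t)$ via the absolute-probability identity, the cosine-rule expansion, the bounded-subgradient estimate for the quadratic term, convexity at the local iterates with Lipschitz transfer producing the consensus-error terms, and the weight-dispersion bound giving the $\max_{i,j}|\pi_i(t+1)\delta_i(t)-\pi_j(t+1)\delta_j(t)|$ term. The only difference is organizational—you center the weights around their mean before splitting the inner product, while the paper first splits $X(t)^\top\pi(t)-u$ into $(x_i(t)-u)+X(t)^\top(\pi(t)-\mathbf{e}_i)$ and then centers the weights inside the convexity term—and both orderings yield exactly the same four terms.
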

\begin{proof} We recall from \eqref{eq-2-1} that $\pi (t)$ satisfies $\pi (t+1)^\top P(t) = \pi (t)^\top$. Combining this with \eqref{frame}, we find the following equality
\begin{align*}
 X(t+1)^\top \pi(t+1)-u &=X(t)^\top P(t)^\top\pi(t+1)-G(t)^\top \Delta(t)^\top \pi(t+1) -u\\
&=X(t)^\top  \pi(t)-G(t)^\top \Delta(t)^\top \pi(t+1) -u.
\end{align*}
Using this identitiy, we compute
\begin{align*}
\left\| X(t+1)^\top \pi(t+1)-u \right\|_2^2&=\left\| X(t)^\top  \pi(t)-G(t)^\top \Delta(t)^\top \pi(t+1) -u \right\|_2^2\\
&=\left\|X(t)^\top \pi(t)-u\right\|_2^2+ \left\|\sum_{i=1}^N\pi_i(t+1) \delta_i(t)g_i(t) \right\|_2^2\\
&\quad-2 \sum_{i=1}^N\pi_i(t+1)\delta_i(t)\left\langle X(t)^\top \pi(t)-u,g_i(t)\right\rangle\\
&=:\left\|X(t)^\top \pi(t)-u\right\|_2^2+I_1+I_2.
\end{align*}
We estimate $I_1$ as
\begin{align*}
I_1&\leq  \left(\sum_{i=1}^N\pi_i(t+1) \delta_i(t)\left\|g_i(t) \right\|_2\right)^2\\
&\leq \|\Delta(t)\|_\infty^2\left(\sum_{i=1}^N\pi_i(t+1) L_i\right)^2\\
&\leq \|\Delta(t)\|_\infty^2\left(\max_{1\leq i\leq N} L_i\right)^2,
\end{align*}
where $\sum_{i=1}^N \pi_i (t+1) =1$ is used in the third inequality. Next we decompose $I_2$ as follows:
\begin{align*}
I_2&=-2\sum_{i=1}^N\pi_i(t+1)\delta_i(t)\left\langle X(t)^\top\bold e_i -u,g_i(t)\right\rangle\\
&\quad-2 \sum_{i=1}^N\pi_i(t+1)\delta_i(t)\left\langle X(t)^\top (\pi(t)-\bold e_i) ,g_i(t)\right\rangle\\
&=:I_{21}+I_{22}.
\end{align*}
By the convexity of $f_i$, we obtain
\begin{align*}
I_{21}&=-2\sum_{i=1}^N\pi_i(t+1)\delta_i(t)\left\langle x_i(t)-u, g_i(t)\right\rangle\\
&\leq 2 \sum_{i=1}^N\pi_i(t+1)\delta_i(t)\left(f_i(u)-f_i(x_i(t))\right)\\
&= 2 \sum_{i=1}^N\left(\frac{1}{N}\sum_{j=1}^N\pi_j(t+1)\delta_j(t)\right) \left(f_i(u)-f_i(X(t)^\top \pi(t))\right)\\
&\quad+ 2 \sum_{i=1}^N\left(\pi_i(t+1)\delta_i(t)-\frac{1}{N}\sum_{j=1}^N\pi_j(t+1)\delta_j(t)\right)\left(f_i(u)-f_i(X(t)^\top \pi(t))\right)\\
&\quad+2 \sum_{i=1}^N\pi_i(t+1)\delta_i(t)\left(f_i(X(t)^\top \pi(t))-f_i(X(t)^\top \bold e_i)\right).
\end{align*}
Here we recall that $\sup_{g \in \partial f_i (z)} \|g\|_1 \leq L_i$ for all $z \in \mathbb{R}^d$ and achieve the following estimates:
\begin{align*}
I_{21}&\leq \frac{2}{N} \|\Delta(t) \pi(t+1)\|_1\sum_{i=1}^N \left(f_i(u)-f_i(X(t)^\top \pi(t))\right)\\
&\quad+2 \left(\max_{i,j}|\pi_i(t+1)\delta_i(t)-\pi_j(t+1)\delta_j(t)|\right)\sum_{i=1}^NL_i\left\|X(t)^\top \pi(t) -u\right\|_2\\
&\quad+2\|\Delta(t)\|_\infty\sum_{i=1}^N  L_i \left\|X(t)^\top(\pi(t)-\bold e_i) \right\|_2
\end{align*}
and
\begin{align*}
I_{22}&\leq 2\|\Delta(t)\|_\infty\sum_{i=1}^N  L_i \left\|X(t)^\top(\pi(t)-\bold e_i)  \right\|_2.
\end{align*}
Combining the above estimates finishes the proof.
\end{proof}
We recall from \cite[Lemma 7]{NO2} the following result. 
\begin{lem}\label{lem-6-4}
Consider a minimization problem $\min_{x\in\mathbb R^d}f(x)$, where $f:\mathbb R^d\to\mathbb R$ is continuous. Suppose that the solution set $X^*$ of the problem is nonempty, and let $\{x(t)\}_{t\geq0}$ be a sequence such that for all $x^*\in X^*$ and $t\geq0$,
\[
\|x(t+1)-x^*\|^2\leq (1+b(t))\|x(t)-x^*\|^2-a(t)(f(x(t))-f(x^*))+c(t),
\]
where 
\[
a(t),~b(t),~c(t)\geq0~(\forall~t\geq0),\quad \sum_{t=0}^\infty a(t)=\infty,\quad\sum_{t=0}^\infty b(t)<\infty,\quad\sum_{t=0}^\infty c(t)<\infty.
\]
Then the sequence $\{x(t)\}_{t\geq0}$ converges to some solution $x^*\in X^*$.
\end{lem}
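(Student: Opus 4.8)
The plan is to run the classical deterministic supermartingale (Robbins--Siegmund type) argument. Its computational engine is the following scalar fact, which I would establish first: if $\{u(t)\}_{t\geq0}$ is nonnegative and satisfies $u(t+1)\leq(1+b(t))u(t)+c(t)$ with $\sum_t b(t)<\infty$ and $\sum_t c(t)<\infty$, then $u(t)$ converges to a finite limit. To see this, set $P(t):=\prod_{s=0}^{t-1}(1+b(s))$, which tends to a finite limit $P_\infty\geq1$ precisely because $\sum_t b(t)<\infty$. Dividing the recursion by $P(t+1)\geq1$ gives
\[
\frac{u(t+1)}{P(t+1)}\leq\frac{u(t)}{P(t)}+\frac{c(t)}{P(t+1)}\leq\frac{u(t)}{P(t)}+c(t),
\]
so the auxiliary sequence $w(t):=\frac{u(t)}{P(t)}-\sum_{s=0}^{t-1}c(s)$ is nonincreasing and bounded below (by $-\sum_s c(s)$), hence convergent. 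Since $P(t)\to P_\infty$ and $\sum_t c(t)<\infty$, it follows that $u(t)$ itself converges.

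Next I fix an arbitrary optimizer $x^*\in X^*$. Because $f(x(t))-f(x^*)\geq0$, the hypothesis drops to $\|x(t+1)-x^*\|^2\leq(1+b(t))\|x(t)-x^*\|^2+c(t)$, so the scalar fact applied to $u(t)=\|x(t)-x^*\|^2$ shows that $\|x(t)-x^*\|^2$ converges; in particular $\{x(t)\}$ is bounded, say $\|x(t)-x^*\|^2\leq M$ for all $t$. Rearranging the full hypothesis gives
\[
a(t)\big(f(x(t))-f(x^*)\big)\leq(1+b(t))\|x(t)-x^*\|^2-\|x(t+1)-x^*\|^2+c(t).
\]
Summing over $t$, the telescoped differences converge because $\|x(t)-x^*\|^2$ does, while $\sum_t b(t)\|x(t)-x^*\|^2\leq M\sum_t b(t)<\infty$ and $\sum_t c(t)<\infty$; hence $\sum_t a(t)\big(f(x(t))-f(x^*)\big)<\infty$.

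Since $\sum_t a(t)=\infty$ but this last series converges with nonnegative summands, necessarily $\liminf_{t\to\infty}\big(f(x(t))-f(x^*)\big)=0$, so along some subsequence $f(x(t_k))\to f(x^*)=\min f$. Using boundedness of $\{x(t)\}$ I pass to a further subsequence converging to some $\bar x$, and continuity of $f$ yields $f(\bar x)=\min f$, i.e. $\bar x\in X^*$. The final and \emph{key} step is to couple this cluster point with the distance convergence: applying the first step with the \emph{specific} optimizer $x^*=\bar x$ shows $\|x(t)-\bar x\|^2$ converges, whereas along the chosen subsequence it tends to $0$; therefore the whole sequence satisfies $\|x(t)-\bar x\|^2\to0$, i.e. $x(t)\to\bar x\in X^*$. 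I expect this coupling (an Opial-type argument) to be the only delicate point: convergence of $\|x(t)-x^*\|$ holds for \emph{every} fixed optimizer but does not by itself pin down a limit, so one must first manufacture a cluster point $\bar x\in X^*$ from the $\liminf$ estimate and only then reuse the distance convergence at that particular $\bar x$. The scalar recursion lemma, though it drives the computation, is otherwise routine.
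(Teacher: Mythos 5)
Your proof is correct, and it is worth noting that the paper itself does not prove this lemma at all: it simply recalls the statement from \cite[Lemma 7]{NO2} and uses it as a black box. What you have supplied is a complete, self-contained derivation of that cited result, and each of your steps checks out. The scalar recursion argument (normalizing by $P(t)=\prod_{s<t}(1+b(s))$, which converges since $\sum_t b(t)<\infty$, and extracting a monotone auxiliary sequence) is the standard deterministic Robbins--Siegmund device; dropping the nonnegative term $a(t)(f(x(t))-f(x^*))$ to get convergence of $\|x(t)-x^*\|^2$ for \emph{every} fixed $x^*\in X^*$, then summing the full inequality to conclude $\sum_t a(t)\bigl(f(x(t))-f(x^*)\bigr)<\infty$, and finally combining $\sum_t a(t)=\infty$ with boundedness, continuity of $f$, and the Opial-type coupling at a cluster point $\bar x\in X^*$ is exactly the right chain of reasoning, and you correctly identify the coupling step as the one place where care is needed (distance convergence alone does not single out a limit; one must first produce a cluster point in $X^*$ and only then reuse the distance convergence at that specific point). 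So relative to the paper your route is genuinely different only in the sense that you prove what the authors merely cite; your argument buys self-containedness at essentially no extra cost, while the paper's citation keeps its Section 6 short by outsourcing this classical fact to Nedi\'c--Olshevsky.
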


We conclude this section by presenting the proof of the main theorem. 
\begin{proof}[Proof of Theorem \ref{Tmain}]

We let $x(t): = X(t)^\top \pi (t)$. Then the estimate of Lemma \ref{lem-6-3} with $u=x^*$ gives
\begin{equation}
\|x(t+1) -x^*\|^2 \leq \|x(t) -x^*\|^2 - a(t) (f(x(t))- f(x^*)) + c(t),
\end{equation}
where $a(t):=\frac{2}{N}\|\Delta(t)\pi(t+1)\|_1$ and
\begin{align*}
c(t)&:= \|\Delta(t)\|_\infty^2\left(\max_{1\leq i\leq N} L_i\right)^2\\
&+2 \left(\max_{i,j}|\pi_i(t+1)\delta_i(t)-\pi_j(t+1)\delta_j(t)|\right)\sum_{i=1}^NL_i\left\|X(t)^\top \pi(t) -x^*\right\|_2\\
&+4\|\Delta(t)\|_\infty\sum_{i=1}^N  L_i \left\|X(t)^\top(\pi(t)-\bold e_i)  \right\|_2.
\end{align*}
By $(\mathcal A 3)$ we see that
\begin{equation}\label{eq-6-2}
\sum_{t=0}^{\infty} a(t) = \frac{2}{N} \sum_{t=0}^{\infty}\|\Delta (t) \pi (t+1)\|_{1} =\infty.
\end{equation}
Next we estimate $\sum_{t=0}^{\infty} c(t)$. It follows from $(\mathcal A 2)$ that
\begin{equation}
\sum_{t=0}^{\infty} \|\Delta (t)\|_{\infty}^2 \leq \sum_{t=0}^{\infty} \Big( \|\Delta (t)\| \sup_{l \geq t}\|\Delta (t)\|\Big) < \infty.
\end{equation}
Combining Lemma \ref{lem-6-2} and $(\mathcal A 3)$, we derive
\begin{equation}
\begin{split}
&\sum_{t=0}^{\infty} 2 \left(\max_{i,j}|\pi_i(t+1)\delta_i(t)-\pi_j(t+1)\delta_j(t)|\right)\sum_{i=1}^NL_i\left\|X(t)^\top \pi(t) -x^*\right\|_2\\
&\leq 2C \sum_{t=0}^{\infty} \sqrt{t} \left(\max_{i,j}|\pi_i(t+1)\delta_i(t)-\pi_j(t+1)\delta_j(t)|\right) < \infty.
\end{split}
\end{equation}
Next we apply Lemma \ref{lem-6-1} (3) to find
\begin{equation}
\sum_{t=0}^{\infty} 4 \|\Delta (t)\|_{\infty} \sum_{i=1}^N L_i \|X(t)^\top (\pi (t) -\bold e_i)\|_2 < \infty,
\end{equation}
where we used that $(\pi (t) -\bold  e_i)^\top 1_{N} = 1 -1 =0$ for all $t \geq 0$ and $1\leq i \leq N$. Combining the above estimates we find that 
\begin{equation}\label{eq-6-3}
\sum_{t=0}^{\infty} c(t) < \infty. 
\end{equation}
Given the estimates \eqref{eq-6-2} and \eqref{eq-6-3}, we may apply Lemma \ref{lem-6-4} to conclude that $x(t) = X(t)^\top \pi (t)$ converges to some minimizer $x^* \in \mathbb{R}^d$. In addition, we have
\begin{equation}
\lim_{t \rightarrow \infty} \|X(t)^\top \pi (t) -x_i (t)\|_2 =\lim_{t \rightarrow \infty} \|X(t)^\top (\pi (t) -\bold e_i)\|_2 =0,
\end{equation}
which yields that $\lim_{t \rightarrow \infty} x_i (t) =x^*$. The proof is complete.
\end{proof}

\end{document}